\newtheorem{lemma}{Lemma}[section]
\newtheorem{prop}[lemma]{Proposition}
\newtheorem*{Mtheorem}{Main Theorem}
\newtheorem{coro}[lemma]{Corollary}
\newtheorem{rema}[lemma]{{\rm\bf Remark}}
\newtheorem{Def}[lemma]{Definition}
\newtheorem{ex}[lemma]{Example}
\newcommand{\kk}{{\bf k}}
\newcommand{\la}{\langle}
\newcommand{\ra}{\rangle}
\renewcommand{\le}{\leqslant}
\renewcommand{\ge}{\geqslant}
\newenvironment{Ex}{\begin{ex}
\rm }{\end{ex}}
\numberwithin{equation}{section}
\begin{document}

\title{$n$-ary algebras of the first level.}

\author{Yury Volkov}
%\date{}
\maketitle
\begin{abstract}
$n$-ary algebras of the first degeneration level are described. A detailed classification is given in the cases $n=2,3$.
\end{abstract}

{\bf Keywords:} $n$-ary algebra, level of algebra, orbit closure, degeneration

       \vspace{0.3cm}

{\it 2010 MSC}: 17A01, 14J10, 14L30.

       \vspace{0.3cm}

\section{Introduction}

Algebras considered in this paper are by definition linear spaces with $n$-ary $n$-linear operation that does not have to satisfy any other properties.
The main object considered in this paper is the  degeneration of algebras.
Roughly speaking, the algebra $A$ degenerates to the algebra $B$ if there is a family of algebra structures parameterized by an element of the ground field such that infinitely many structures in the family represent $A$ and there exists a structure belonging to this family representing $B$. The notion of a degeneration is closely related to the notions of a contraction and a deformation.

Degenerations of binary algebras were considered for a long time in many works (see, for example, \cite{BC99, kpv17, S90}). On the other hand, degenerations of $n$-ary algebras is a relatively novel area. Papers \cite{deaz, mak16} concern this topic but do not give any complete result. Recently in the paper \cite{kpv20} the degenerations in the variety of $(n+1)$-dimensional Filippov ($n$-Lie) algebras were described. The main tools for studying degenerations in the current paper will be taken from \cite{kpv20}. In fact, these tools are generalizations of methods for binary algebras given in \cite{BC99,S90}.

The notion of the level of a binary algebra was introduced in \cite{gorb91}. The algebra under consideration is an algebra of level $m$ if the maximal length of a chain of non-trivial degenerations starting at it equals $m$.
Roughly speaking, the level estimates the complexity of the multiplication of  the given algebra. For example, the unique algebra of the level zero is the algebra with zero multiplication. 

Let us give a short overview of results on levels of binary algebras.
Anticommutative algebras of the first level were classified correctly in \cite{gorb91} and all algebras of the first level were classified in \cite{khud13}. The full classification of algebras of the second level was done in \cite{kpv19} while some particular results appeared before. Nilpotent anticommutative algebras of the levels from three to five were classified in \cite{Volk20}.
In \cite{gorb93} the author introduced the notion of the infinite level that can be expressed in terms of the usual level. The consideration of infinite level for nilpotent algebras allows to exclude some marginal cases from the consideration while for non nilpotent algebras the replacement of the usual level by the infinite level creates a loss of a significant part of classification.

This paper is an initiation of the level classification for $n$-ary algebras. Namely, we will describe $n$-ary algebras of the first level. This classification is in some sense similar to the classification of binary algebras of the first level, but occurs to be much more complicated. As it was mentioned above, we will automatically get the classification of algebras of the first infinite level too. We will also clarify this description in the cases $n=2,3$.

\section{Background on degenerations}

In this section we introduce some notation and recall some well known definitions and results about degenerations that we will need in this work.

All vector spaces in this paper are over some fixed algebraically closed field ${\bf k}$ and we write simply $dim$, $Hom$ and $\otimes$ instead of $dim_{{\bf k}}$, $Hom_{{\bf k}}$ and $\otimes_{{\bf k}}$.
We will write also $A^{\otimes n}$ instead of $\underbrace{{A}\otimes \ldots \otimes {A}}_{n\mbox{\scriptsize { times}}}$.
During this paper we fix some integer $n\ge 2$ and mean by an algebra a vector space with an $n$-ary $n$-linear operation called multiplication, i.e. a space $A$ with a fixed linear map from $A^{\otimes n}$ to $A$.
For an algebra $A$ and $a_1,\dots,a_n\in A$ we will denote the result of the application of multiplication to the $n$-tuple $(a_1,\dots,a_n)$ by $[a_1,\dots,a_n]$.
If $V$ is a linear space and $S$ is a subset of $V$, then we denote by $\la S\ra$ the subspace of $V$ generated by $S$.

Let $V$ be a fixed $m$-dimensional space. Then the {\it set of $m$-dimensional algebra structures} on $V$ is $\mathcal{A}_m=Hom(V^{\otimes m}, V)\cong (V^*)^{\otimes m}\otimes V$. Any $m$-dimensional algebra can be represented by some element of $\mathcal{A}_m$. Two algebras are isomorphic if and only if they can be represented by the same structure.
The set $\mathcal{A}_m$ has a structure of the affine variety ${\bf k}^{m^{n+1}}$.
There is a natural action of the group $GL(V)$ on $\mathcal{A}_m$ defined by the equality $ (g * \mu )(x_1\otimes\dots\otimes x_n) = g\mu(g^{-1}x_1\otimes\dots\otimes g^{-1}x_n)$ for $x_1,\dots,x_n\in V$, $\mu\in \mathcal{A}_m$ and $g\in GL(V)$.
Two structures represent the same algebra if and only if they belong to the same orbit. By $\kk^m$ we will denote the $m$-dimensional algebra with zero multiplication and the structure representing it.
For brevity, we will write $\mu(x_1,\dots,x_n)$ or, if the structure $\mu$ is clear from the context, even $[x_1,\dots,x_n]$ instead of $\mu(x_1\otimes\dots\otimes x_n)$ for $x_1,\dots,x_n\in V$.

Let $A$ and $B$ be $m$-dimensional algebras. Suppose that $\mu,\lambda\in \mathcal{A}_m$ represent $A$ and $B$ respectively. We say that $A$ {\it degenerates} to $B$ and write $A\to B$ if $\lambda$ belongs to $\overline{O(\mu)}$. Here, as usually, $O(X)$ denotes the orbit of $X$ and $\overline{X}$ denotes the Zariski closure of $X$ . We also write $A\not\to B$ if $\lambda\not\in\overline{O(\mu)}$. We say that the degeneration $A\to B$ is {\it trivial} if $A\not\cong B$. We will write $A\xrightarrow{\not\cong} B$ to emphasize that the degeneration $A\to B$ is not trivial.

Whenever an $m$-dimensional space named $V$ appears in this paper, we assume that there is some fixed basis $e_1,\dots, e_m$ of $V$. In this case, for $\mu\in\mathcal{A}_n$, we denote by $\mu_{i_1,\dots,i_n}^j$ ($1\le i_1,\dots,i_n,j\le m$) the structure constants of $\mu$ in this fixed basis, i.e. elements of $\bf k$ such that $\mu(e_{i_1},\dots,e_{i_n})=\sum\limits_{j=1}^m\mu_{i_1,\dots,i_n}^je_j$. To prove degenerations and nondegenerations we will use the same technique that has been already used in \cite{kpv17,kpv19,S90} for binary algebras and was generalized in \cite{kpv20} to the $n$-ary case. In particular, we will be free to use \cite[Lemma 1]{kpv20} and facts that easily follow from it.
Let us recall this lemma. If $A\to B$, $\mu\in\mathcal{A}_m$ represents $A$ and there is a closed subset $\mathcal{R}\subset\mathcal{A}_m$ invariant under lower triangular transformations of the basis $e_1,\dots,e_m$ such that $\mu\in\mathcal{R}$, then there is a structure $\lambda\in\mathcal{R}$ representing $B$. Invariance under lower triangular transformations of the basis $e_1,\dots,e_m$ means that if $\omega\in\mathcal{R}$ and $g\in GL(V)$ has a lower triangular matrix in the basis $e_1,\dots,e_m$, then $g*\omega\in\mathcal{R}$ (see \cite{kpv20} for a more detailed discussion).

To prove degenerations, we will use the technique of contractions. Namely, let $\mu,\lambda\in \mathcal{A}_m$ represent $A$ and $B$ respectively. Suppose that there are some elements $E_i^t\in V$ ($1\le i\le m$, $t\in{\bf k}^*$) such that $E^t=(E_1^t,\dots,E_m^t)$ is a basis of $V$ for any $t\in{\bf k}^*$ and the structure constants of $\mu$ in this basis are $\mu_{i_1,\dots,i_n}^j(t)$ for some polynomials $\mu_{i_1,\dots,i_n}^j(t)\in{\bf k}[t]$. If $\mu_{i_1,\dots,i_n}^j(0)=\lambda_{i_1,\dots,i_n}^j$ for all $1\le i_1,\dots,i_n,j\le m$, then $A\to B$. To emphasize that the {\it parameterized basis} $E^t=(E_1^t,\dots,E_m^t)$ ($t\in{\bf k}^*$) gives a degeneration between algebras represented by the structures $\mu$ and $\lambda$, we will write $\mu\xrightarrow{E^t}\lambda$.

Let us introduce now the notion of a level that will be the main object of interest in this paper.

\begin{Def}{\rm
The {\it level} of the $m$-dimensional $n$-ary algebra $A$ is the maximal number $L$ such that there exists a sequence of non-trivial degenerations $A\xrightarrow{\not\cong}A_{L-1}\xrightarrow{\not\cong}\dots\xrightarrow{\not\cong}A_1\xrightarrow{\not\cong}A_0$ for some $m$-dimensional $n$-ary algebras $A_i$ ($0\le i\le L-1$). The level of $A$ is denoted by $lev(A)$. The {\it infinite level} of the algebra $A$ is the number defined by the equality $lev_{\infty}(A)=\lim\limits_{s\to\infty}lev(A\oplus\kk^s)$.
}\end{Def}

The aim of this paper is to classify up to isomorphism all $n$-ary algebras of the first level. For $n=2$ this was done in \cite{khud13}. We will describe also $n$-ary algebras of the first infinite level.

\begin{Def}{\rm
Let $A$ be an $n$-ary algebra, $A_0$ be a subspace of $A$ and $1\le k\le n$ be some integer.
We will call $A_0$ {\it a $k$-subalgebra} of $A$ if
\begin{itemize}
\item $[a_1,\dots,a_n]\in A_0$ for $a_1,\dots,a_n\in A$ if $|\{i\mid 1\le i\le n, a_i\in A_0\}|=k$;
\item $[a_1,\dots,a_n]=0$ for $a_1,\dots,a_n\in A$ if $|\{i\mid 1\le i\le n, a_i\in A_0\}|>k$.
\end{itemize}
}\end{Def}

Taking $k=n$ in the definition of a $k$-subalgebra, one gets the usual definition of a subalgebra of an $n$-ary algebra. On the other hand, $1$-subalgebra of $A$ is the same thing as a square zero ideal of $A$.

An important role in this paper will be played by some particular cases of degenerations.
These degenerations are generalizations of so called {\it standard In\"on\"u-Wigner contractions} (see \cite{IW}) to the case of $n$-ary algebras.
We will call them {\it IW contractions} for short. Suppose that $A_0$ is an $l$-dimensional $k$-subalgebra of $A$ for some $2\le k\le n$.
Let $\mu\in\mathcal{A}_m$ be a structure representing $A$ such that $A_0$ corresponds to the subspace $\langle e_1,\dots,e_l\rangle$ of $V$.
Then $\mu\xrightarrow{(t^{k-n}e_1,\dots,t^{k-n}e_l,t^{k-1}e_{l+1},\dots,t^{k-1}e_m)}\lambda$ for some $\lambda\in\mathcal{A}_m$ and the algebra $B$ represented by $\lambda$ will be called {\it the $k$-IW contraction of $A$ with respect to $A_0$}. The isomorphism class of the resulting algebra does not depend on the choice of the structure $\mu$ satisfying the condition stated above and always has a $k$-subalgebra $B_0\subset B$ and an $(n-k+1)$-subalgebra $B_0'\subset B$ such that $B=B_0\oplus B_0'$ as a vector space. Moreover, there exists an isomorphism $\varphi:A\rightarrow B$ of vector spaces such that $\varphi(A_0)=B_0$ and $\varphi[a_1,\dots,a_n]=[\varphi(a_1),\dots,\varphi(a_n)]$ for $a_1,\dots,a_n\in A$ if $|\{i\mid 1\le i\le n,a_i\in A_0\}|\ge k$.

\section{Some types of algebras}

In this section we introduce some special classes of algebras. As usually, by algebra we mean an $n$-ary algebra, where $n$ is some fixed integer.

Let us recall that {\it a partition} of the number $n$ is a non increasing sequence of non-negative numbers $(p_1,p_2,\dots)$ such that $\sum\limits_{i=1}^{\infty}p_i=n$. We will denote by $par_n$ the set of all partitions of the number $n$. For two partitions $p,q\in \bigcup\limits_{n\ge 1}par_n$, we say that $p$ is greater than $q$ and write $p>q$ if there is $k\ge 1$ such that $p_i=q_i$ for $1\le i\le k-1$ and $p_k>q_k$. Note that in this way we can compare any two partitions of any two positive integer numbers.
Let $Y$ be a set, $X$ be a subset of $Y$ and $y_1,\dots, y_a$ be some elements of $Y$. In this case we set $\chi_X(y_1,\dots,y_a):=|\{i\mid 1\le i\le a, y_i\in X\}|$. Let us introduce two classes of algebras.

\begin{Def}{\rm
Suppose that $p=(p_1,p_2,\dots)$ is a partition of the number $n$.
The algebra $A$ is called {\it $p$-anticommutative} if, for any chain of subspaces $A_1\subset\dots\subset A_k\subset A$ with $dim\,A_r=r$ for $1\le r\le k$, one has $[a_1,\dots,a_n]=0$ for $a_1,\dots,a_n\in A$ whenever $\chi_{A_r}(a_1,\dots,a_n)=\sum\limits_{i=1}^rp_i$ for $1\le r\le k-1$ and $\chi_{A_k}(a_1,\dots,a_n)>\sum\limits_{i=1}^kp_i$.
The algebra $A$ is called {\it strongly $p$-anticommutative} if, for any $k>0$ and any $k$-dimensional subspace $A_0\subset A$, one has $[a_1,\dots,a_n]=0$ for $a_1,\dots,a_n\in A$ whenever $\chi_{A_0}(a_1,\dots,a_n)>\sum\limits_{i=1}^kp_i$.
}
\end{Def}

\begin{Def}{\rm
Suppose that $p=(p_1,p_2,\dots)$ is a partition of the number $n-1$.
The algebra $A$ is called {\it $p$-attractive} if, for any chain of subspaces $A_1\subset\dots\subset A_k\subset A$ with $dim\,A_r=r$ for $1\le r\le k$, one has $[a_1,\dots,a_n]\in A_k$ for $a_1,\dots,a_n\in A$ whenever $\chi_{A_r}(a_1,\dots,a_n)=\sum\limits_{i=1}^rp_i$ for $1\le r\le k-1$ and $\chi_{A_k}(a_1,\dots,a_n)>\sum\limits_{i=1}^kp_i$.
The algebra $A$ is called {\it strongly $p$-attractive} if, for any $k>0$ and any $k$-dimensional subspace $A_0\subset A$, one has $[a_1,\dots,a_n]\in A_0$ for $a_1,\dots,a_n\in A$ whenever $\chi_{A_0}(a_1,\dots,a_n)>\sum\limits_{i=1}^kp_i$.
}
\end{Def}

It is clear that a strongly $p$-anticommutative algebra is $p$-anticommutative and  a strongly $p$-attractive algebra is $p$-attractive.
Note that for $p=(1,1,\dots)$ the notion a $p$-anticommutative algebra coincides with the notion of a strongly $p$-anticommutative algebra and with the notion of a usual anticommutative algebra.
For $n=2$ and $p=(1,0,\dots)$ both $p$-attractiveness and strong $p$-attractiveness of $A$ are equivalent to the fact that $ab\in\langle a,b\rangle$ for any elements $a,b\in A$; in particular, any element of $A$ is either square zero or idempotent modulo multiplication by a nonzero scalar. In terms of \cite{kpv19} this means that $A$ is of {\it generation type one}. Moreover, one can extract the classification of $(1,0,\dots)$-attractive binary algebras from the same paper. All of these algebras have level $1$ except the algebra $\kk^{m-2}\rtimes{\bf E}_4$ defined in the same paper and all binary algebras of level $1$ are $(1,0,\dots)$-attractive except the Heisenberg Lie algebra $\bf{n}_3$ and the algebra ${\bf A}_3$ from the same paper.

Note also that, for a given $p\in par_n$, $p$-anticommutative structures form a closed subset of $\mathcal{A}_m$ and, for a given $p\in par_{n-1}$, $p$-attractive structures form a closed subset of $\mathcal{A}_m$.
Indeed, let us pick some $1\le k\le m$, $\alpha_{i}^r\in\kk$ ($1\le i\le k$, $1\le r\le m$) and set $v_i=e_i$ for $1\le i\le m$ and $v_{i}:=\sum\limits_{r=1}^m\alpha_{i}^re_r$ for $m+1\le i\le m+k$. Let us also pick also some $\psi\in Map_{n,m+k}$ such that $|\psi^{-1}(m+i)|=p_i$ for $1\le i\le k-1$ and $|\psi^{-1}(m+k)|=p_k+1$. It is clear that $\mu$ represents a $p$-anticommutative algebra if $\mu(v_{\psi(1)},\dots,v_{\psi(n)})=0$ and represents a $p$-attractive algebra if $\mu(v_{\psi(1)},\dots,v_{\psi(n)})\in\langle v_{m+1},\dots,v_{m+k}\rangle$ for any choice of the described parameters. In the case of $p$-anticommutativity, it is enough to write down $\mu(v_{\psi(1)},\dots,v_{\psi(n)})$ as a linear combination of $e_1,\dots,e_m$ whose coefficients are polynomials in structure constants of $\mu$ and $\alpha_{i}^r$ ($1\le i\le k$, $1\le r\le m$). Then one has to write these polynomials as polynomials in $\alpha_{i}^r$ ($1\le i\le k$, $1\le r\le m$) with coefficients in the ring of polynomials in structure constants of $\mu$. Equating these coefficients to zero one gets polynomial equations in structure constants defining the set of $p$-anticommutative algebras. In the case of $p$-attractiveness one can note that $\mu(v_{\psi(1)},\dots,v_{\psi(n)})\in\langle v_{m+1},\dots,v_{m+k}\rangle$ for any choice of parameters if and only if $v_{m+1},\dots,v_{m+k},\mu(v_{\psi(1)},\dots,v_{\psi(n)})$ are linearly dependent for any choice of parameters. Indeed, $\mu(v_{\psi(1)},\dots,v_{\psi(n)})\in\langle v_{m+1},\dots,v_{m+k}\rangle$ clearly implies that $v_{m+1},\dots,v_{m+k},\mu(v_{\psi(1)},\dots,v_{\psi(n)})$ are linearly dependent. On the other hand, if $\mu(v_{\psi(1)},\dots,v_{\psi(n)})\not\in\langle v_{m+1},\dots,v_{m+k}\rangle$ for some choice of parameters, then choosing minimal possible $k$, one may assume that $v_{m+1},\dots,v_{m+k}$ are linearly independent, and hence $v_{m+1},\dots,v_{m+k},\mu(v_{\psi(1)},\dots,v_{\psi(n)})$ are linearly independent too. On the other hand, linear dependence is equivalent to vanishing of all $(k+1)\times (k+1)$ minors of the $(k+1)\times m$ matrix with coefficients in the ring of polynomials in structure constants of $\mu$ and $\alpha_{i}^r$ ($1\le i\le k$, $1\le r\le m$) whose $(i,r)$ ($1\le i\le k$, $1\le r\le m$) entree equals $\alpha_i^r$ and $(k+1,r)$ ($1\le r\le m$) entree equals the coefficient of $e_r$ in the decomposition of $\mu(v_{\psi(1)},\dots,v_{\psi(n)})$ by the basis $e_1,\dots,e_m$. Considering these minors as polynomials in $\alpha_{i}^r$ ($1\le i\le k$, $1\le r\le m$) over the ring of polynomials in structure constants of $\mu$ and equating all coefficients of these polynomials to zero, one gets polynomial equations in structure constants defining the set of $p$-attractive algebras.
It is not difficult to see that the defining polynomials both for $p$-anticommutativity and $p$-attractiveness are linear, and hence $p$-anticommutative and $p$-attractive algebras form two closed irreducible subvarieties of $\mathcal{A}_m$. We do not need the last mentioned fact in the current paper.

Now we introduce two classes of algebras that will play a crucial role in our classification.
Let us recall that an $n$-linear form on the space $U$ is a linear map $\Delta:{U}^{\otimes n}\rightarrow\kk$.

\begin{Def}{\rm
{\it The algebra of the $n$-linear form} $\Delta$ is the $n$-ary algebra that as a linear space is isomorphic to $U\oplus\kk$ and has the multiplication defined by the formula $[(u_1,\alpha_1),\dots,(u_n,\alpha_n)]=\big(0,\Delta(u_1\otimes\dots\otimes u_n)\big)$ for $u_1,\dots,u_n\in U$ and $\alpha_1,\dots,\alpha_n\in\kk$. Equivalently these algebras can be characterized as algebras $A$ that have a nonzero element $a\in A$ such that one has $[a_1,\dots,a_n]\in\langle a\rangle$ for any $a_1,\dots,a_n\in A$ and
$[a_1,\dots,a_n]=0$ whenever $a_i=a$ for some $1\le i\le n$.
}
\end{Def}

It is clear from the last description that the set of structures $\mu\in\mathcal{A}_m$ representing algebras of $n$-linear forms is closed; in particular, an algebra of a nonzero $n$-linear form that does 
not degenerate non-trivially to an algebra of a nonzero $n$-linear form has level one.

\begin{Def}{\rm
The $n$-ary algebra $A$ is called {\it subalgebraic} if any subspace of $A$ is a subalgebra of $A$. Equivalently, $A$ is subalgebraic if for any $a_1,\dots,a_n\in A$ one has $[a_1,\dots,a_n]\in \langle a_1,\dots,a_n\rangle$.
}
\end{Def}

Let us now explain how one can describe the set of structures representing subalgebraic algebras by polynomial equations.
If $A$ is subalgebraic, then for any $c_1,\dots,c_k\in A$ and any $\alpha_i^j\in\kk$ ($1\le i\le n$, $1\le j\le k$) one has $[a_1,\dots,a_n]\in \langle a_1,\dots,a_n\rangle\subset \langle c_1,\dots,c_k\rangle$ for $a_i=\sum\limits_{j=1}^k\alpha_i^jc_j$ ($1\le i\le n$). In particular, $c_1,\dots,c_k,[a_1,\dots,a_n]$ are linearly dependent.
Conversely, if for any $c_1,\dots,c_k\in A$ and any $\alpha_i^j\in\kk$ ($1\le i\le n$, $1\le j\le k$) the elements $c_1,\dots,c_k,[a_1,\dots,a_n]$ are linearly dependent for $a_i$ defined as above, then $A$ is subalgebraic. Really, let us pick some $a_1,\dots,a_n\in A$. Suppose that $\langle a_1,\dots,a_n\rangle$ is a $k$-dimensional space with the basis $c_1,\dots,c_k$. Then we have $a_i=\sum\limits_{j=1}^k\alpha_i^jc_j$ for some $\alpha_i^j\in\kk$ ($1\le i\le n$, $1\le j\le k$) and, by our assumption, $c_1,\dots,c_k,[a_1,\dots,a_n]$ are linearly dependent. Since $c_1,\dots,c_k$ are linearly independent, one has $[a_1,\dots,a_n]\in \langle c_1,\dots,c_k\rangle=\langle a_1,\dots,a_n\rangle$.
Now it is not difficult to write down the required equations. For each $1\le k\le n$ one takes $\beta_j^r, \alpha_i^j\in\kk$ ($1\le i\le n$, $1\le j\le k$, $1\le r\le m$) and calculate the coefficients $f^1,\dots,f^m$ in the expression
$$\mu\left(\sum\limits_{j=1}^k\alpha_1^j\sum\limits_{r=1}^m\beta_j^re_r,\dots,\sum\limits_{j=1}^k\alpha_n^j\sum\limits_{r=1}^m\beta_j^re_r\right)=\sum\limits_{r=1}^mf^re_r$$
for $\mu\in\mathcal{A}_m$.
Note that $f^r$ are polynomials in $\beta_j^r, \alpha_i^j,\mu_{x_1,\dots,x_n}^y\in\kk$ ($1\le i\le n$, $1\le j\le k$, $1\le r,x_1,\dots,x_n,y\le m$) that are linear in structure constants of $\mu$. As it is explained above, $\mu$ is subalgebraic if and only if for any choice of $k$, $\beta_j^r$ and $\alpha_i^j$ the elements $\sum\limits_{r=1}^m\beta_1^re_r,\dots,\sum\limits_{r=1}^m\beta_k^re_r,\sum\limits_{r=1}^mf^re_r$ are linearly dependent. The last condition holds if and only if all $(k+1)\times (k+1)$ minors of the matrix
$$
\begin{pmatrix}
\beta_1^1&\cdots&\beta_1^m\\
\vdots&\vdots&\vdots\\
\beta_k^1&\cdots&\beta_k^m\\
f^1&\cdots&f^m
\end{pmatrix}
$$
vanish. These minors are polynomials in $\beta_j^r$, $\alpha_i^j$ and $\mu_{x_1,\dots,x_n}^y$ linear in $\mu_{x_1,\dots,x_n}^y$. A fixed structure $\mu$ is subalgebraic if and only if, substituting the structure constants of $\mu$, one gets identically zero polynomials in $\beta_j^r$ and $\alpha_i^j$. In this way we get a set of linear equations in $\mu_{x_1,\dots,x_n}^y$ that determine the set of subalgebraic structures. In particular, this set is a closed irreducible subvariety of $\mathcal{A}_m$.

Now it is not difficult to see that structures representing algebras of $n$-linear forms and structures representing subalgebraic algebras form two closed subsets of $\mathcal{A}_m$ that intersect only at the structure with zero multiplication.
Moreover, if $A$ is not subalgebraic, then it degenerates to an algebra of a nonzero $n$-linear form. Indeed, if $A$ is not subalgebraic, then it can be represented by a structure $\mu\in\mathcal{A}_m$ such that $\mu_{i_1,\dots,i_n}^m\not=0$ for some $1\le i_1,\dots,i_n\le m-1$, and hence $\mu\xrightarrow{te_1,\dots,te_{m-1},t^ne_m}\lambda$, where $\lambda$ represents an algebras of a nonzero $n$-linear form. Thus, to classify $n$-ary algebras of the first level it is enough to consider algebras of $n$-linear forms and subalgebraic algebras. Moreover, these two classes do not intersect at the first level.

\section{Algebras of $n$-linear forms}

In this section we give the first portion of our classification. Namely, we describe algebras of $n$-linear forms with level one. From here on $Map_{a,b}$ denotes the set of all maps from $\{1,\dots,a\}$ to $\{1,\dots,b\}$.
Given $\phi\in Map_{a,b}$, $S\subset\{1,\dots,a\}$ and $1\le r\le b$, we define $\partial_S^r\phi\in Map_{a,b}$ by the equalities $\partial_S^r\phi(i)=\phi(i)$ for $i\in\{1,\dots,a\}\setminus S$ and $\partial_S^r\phi(i)=r$ for $i\in S$.

\begin{Def}{\rm
Let $p$ be a partition of $n$ with $p_m=0$ and $A$ be an $m$-dimensional algebra of an $n$-linear form.
The algebra $A$ is called {\it $p$-minimal} if it is $p$-anticommutative and can be represented by a structure $\mu\in\mathcal{A}_m$ such that if $\mu_{\phi(1),\dots,\phi(n)}^j\not=0$ for $1\le j\le m$ and $\phi\in Map_{n,m}$, then $j=m$ and $|\phi^{-1}(i)|=p_i$ for any $1\le i\le m$.
}
\end{Def}

\begin{prop}\label{nlpm} Let $p\in par_n$ be such that $p_m=0$. Suppose that the $m$-dimensional algebra  $A$ is represented by the structure $\mu$  such that $\mu(e_{\phi(1)},\dots,e_{\phi(n)})=\alpha_{\phi}e_m$ for $\phi\in Map_{n,m}$, where
$\alpha_{\phi}=0$ if $|\phi^{-1}(i)|\not=p_i$ for some $1\le i\le m-1$.
Then $A$ is $p$-minimal if and only if for any $1\le x<y\le m-1$, any $1\le k\le p_y$ and any $\psi\in Map_{n,m}$ such that $|\psi^{-1}(i)|=p_i$ for $1\le i\le m-1$, $i\not=x,y$, $|\psi^{-1}(x)|=p_x+k$ and $|\psi^{-1}(y)|=p_y-k$, one has
\begin{equation}\label{pmineq}
\sum\limits_{S\subset \psi^{-1}(x),|S|=k}\alpha_{\partial_S^y\psi}=0.
\end{equation}
\end{prop}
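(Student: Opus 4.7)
The plan is to read the proposition as a characterization of $p$-anticommutativity for the given concrete $\mu$: since $\alpha_\phi\ne 0$ only when $|\phi^{-1}(i)|=p_i$ for all $i<m$, the structure $\mu$ is $p$-anticommutative with respect to the standard flag $\langle e_1\rangle\subset\dots\subset\langle e_1,\dots,e_{m-1}\rangle$ automatically, so $p$-minimality of $A$ is equivalent to $p$-anticommutativity with respect to every other flag, which amounts to checking the standard-flag condition in every basis obtained from $e_1,\dots,e_m$ by a change of basis. The idea is that it suffices to test this against the one-parameter elementary base changes $e_x\mapsto e_x+te_y$ with $x<y$, and that equation (\ref{pmineq}) is exactly what this test produces.

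For the forward direction, given $(x,y,k,\psi)$ as in the statement, I consider the chain $A_1\subset\dots\subset A_x$ with $A_r=\langle e_1,\dots,e_r\rangle$ for $r<x$ and $A_x=\langle e_1,\dots,e_{x-1},e_x+te_y\rangle$, and set $a_i=e_x+te_y$ for $i\in\psi^{-1}(x)$ and $a_i=e_{\psi(i)}$ otherwise. A direct count gives $\chi_{A_r}(a_1,\dots,a_n)=\sum_{j=1}^r p_j$ for $r<x$ and $\chi_{A_x}(a_1,\dots,a_n)=\sum_{j=1}^x p_j+k$, so the chain condition holds and $[a_1,\dots,a_n]=0$ by $p$-anticommutativity. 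Expanding multilinearly in $t$ yields
\[
[a_1,\dots,a_n]=\sum_{S\subset\psi^{-1}(x)}t^{|S|}\,\alpha_{\partial_S^y\psi}\,e_m,
\]
and the defining hypothesis on $\mu$ forces $\alpha_{\partial_S^y\psi}=0$ unless $|S|=k$ (otherwise $|(\partial_S^y\psi)^{-1}(x)|\ne p_x$). Thus only the coefficient of $t^k$ can be nonzero, and its vanishing is precisely (\ref{pmineq}).

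For the backward direction, given an arbitrary chain $A_1\subset\dots\subset A_K$ with valid $a_1,\dots,a_n$, I would fix a basis $f_1,\dots,f_m$ adapted to the chain and expand $a_i=\sum_j d_{ij}f_j$ with $d_{ij}=0$ for $j>r_i$ (where $a_i\in A_{r_i}\setminus A_{r_i-1}$) and $d_{i,r_i}\ne 0$. Multilinearity reduces $[a_1,\dots,a_n]=0$ for every valid $d$ to the vanishing of $\mu(f_{\phi(1)},\dots,f_{\phi(n)})$ for every $\phi$ admitting a lifting to a valid level assignment with $\phi(i)\le r_i$; a standard Hall matching argument identifies these with the $\phi$ satisfying $|\phi^{-1}(\{1,\dots,r\})|\ge\sum_{j=1}^r p_j$ for $r<K$ and $|\phi^{-1}(\{1,\dots,K\})|\ge\sum_{j=1}^K p_j+1$. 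Writing $f_j=\sum_k g_{jk}e_k$, the requirement becomes a polynomial identity in the entries of $g$ which by Zariski density amounts to the vanishing of each monomial coefficient, and each such coefficient is a sum $\sum_{\psi'\in[\psi]}\alpha_{\psi'}$ over an equivalence class of maps under permutations of $\{1,\dots,n\}$ preserving $\phi$-fibers; the task is to show each such class-sum is a linear consequence of (\ref{pmineq}).

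The main obstacle is precisely this last combinatorial step: equation (\ref{pmineq}) is indexed by a single exchange parameter $(x,y,k)$, while the class-sums produced by a general flag can entangle several index values simultaneously. In small cases (for instance $n=4$, $p=(2,2)$) the required identities are not among the (\ref{pmineq}) equations individually, but arise as explicit linear combinations of (\ref{pmineq}) taken over varying $(x,y,k,\psi)$; I would formalize the general case by inducting on the number of coordinates at which $\phi$ deviates from the profile $(p_1,\dots,p_{K-1},p_K+1)$, at each step using (\ref{pmineq}) to absorb an excess label into a neighboring coordinate and peel off one layer of the target class-sum.
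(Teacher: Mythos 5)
Your forward direction is correct and complete: the chain $A_1\subset\dots\subset A_x$ with $A_x=\langle e_1,\dots,e_{x-1},e_x+te_y\rangle$, the multilinear expansion in $t$, and the observation that the support condition on $\mu$ kills every coefficient except that of $t^k$ together yield exactly \eqref{pmineq}. This agrees with the computation at the start of the paper's proof.

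The backward direction, however, has a genuine gap, and you flag it yourself. After reducing $p$-anticommutativity for an arbitrary flag to the vanishing of the fiber-class sums $\sum_{\psi'\in[\psi]}\alpha_{\psi'}$, everything hinges on showing that each such sum lies in the linear span of the relations \eqref{pmineq}; the proposed induction (``absorb an excess label into a neighboring coordinate and peel off one layer'') specifies neither a decreasing invariant nor a verification that the absorbing step stays inside that span, so the argument does not close as written. The paper sidesteps this combinatorics entirely by reading \eqref{pmineq} differently: the same expansion you used shows that \eqref{pmineq} is precisely the statement that the structure constants of $\mu$ are \emph{unchanged} under every elementary substitution $e_x\mapsto e_x+\alpha e_y$ with $x<y$ (the case $y=m$ being automatic, since $e_m$ never occurs as an input of a nonzero product and outputs lie in $\langle e_m\rangle$). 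These substitutions generate all lower unitriangular base changes, so the structure constants are the same in every basis of the form $e_{i}+\sum_{j>i}\ast\, e_j$; since every $k$-dimensional subspace admits an echelon basis of this shape, and the support condition makes the strong $p$-anticommutativity condition trivially true for coordinate subspaces, strong $p$-anticommutativity --- hence $p$-anticommutativity --- follows for all subspaces at once. Replacing your class-sum induction with this invariance-plus-generation argument would make the proof complete.
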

\begin{proof} Let us set $e_i^{\alpha}=e_i$ for $1\le i\le m$, $i\not=x$ and $e_x^{\alpha}=e_x+\alpha e_y$ with $\alpha\in \kk$ and $1\le x<y\le m$. Then, for $\psi\in Map_{n,m}$ one has
$$
\mu\big(e_{\psi(1)}^{\alpha},\dots,e_{\psi(m)}^{\alpha}\big)	=\begin{cases}
\left(\alpha^k\sum\limits_{S\subset \psi^{-1}(x),|S|=k}\alpha_{\partial_S^y\psi}\right)e_m&\mbox{ if $|\psi^{-1}(i)|=p_i$ for  $1\le i\le m$,}\\
&\mbox{ $i\not=x,y$, $|\psi^{-1}(x)|=p_x+k$,}\\
&\mbox{ $|\psi^{-1}(y)|=p_y-k$, $k\ge 0$};\\
0,&\mbox{ otherwise}.
\end{cases}
$$
Now it is clear that if $A$ is $p$-anticommutative, then the quality \eqref{pmineq} has to be satisfied for all $x$, $y$, $k$ and $\psi$ as in the statement of the proposition.

Suppose now that the quality \eqref{pmineq} is valid for all $x$, $y$, $k$ and $\psi$ as in the statement of the proposition. It is enough to show that $A$ is strongly $p$-anticommutative. Note that the condition from the definition of strong $p$-anticommutativity is satisfied if $A_0$ has a basis of the form $e_{i_1},\dots,e_{i_k}$. On the other hand, it follows from the argument above that the structure constants of $\mu$ in the basis $e_1,\dots,e_{x-1},e_x+\alpha e_y,e_{x+1},\dots,e_m$ coincide with the structure constants of $\mu$ in the basis $e_1,\dots,e_m$ for any $1\le x<y\le m$ and any $\alpha\in\kk$. Since such transformations of the basis generate all lower unitriangular transformations,  the structure constants of $\mu$ in the basis $e_1+\sum\limits_{i=2}^m\alpha_1^ie_i,e_2+\sum\limits_{i=3}^m\alpha_1^ie_i,\dots,e_m$ coincide with the structure constants of $\mu$ in the basis $e_1,\dots,e_m$ for any $\alpha_i^j\in\kk$ ($1\le i<j\le m$). Then the condition from the definition of strong $p$-anticommutativity is satisfied if $A_0$ has a basis of the form $e_{i_1}+\sum\limits_{i=i_1+1}^m\alpha_{i_1}^ie_i,\dots,e_{i_k}+\sum\limits_{i=i_k+1}^m\alpha_{i_k}^ie_i$ for some $1\le i_1<\dots<i_k\le m$. Since any $k$-dimensional space of $V$ has such a basis, $A$ is strongly $p$-anticommutative.
\end{proof}

\begin{coro}\label{nlinlev} Let $p$ be a partition of $n$ such that $p_m=0$. Then any $m$-dimensional $p$-minimal algebra of a nonzero $n$-linear form has level one.
\end{coro}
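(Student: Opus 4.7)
The plan is to apply \cite[Lemma~1]{kpv20} with the one-dimensional closed subspace $\RR:=\kk\mu\subset\AAA_m$, where $\mu$ is a structure representing $A$ of the form described in Proposition~\ref{nlpm}. Once I verify that $\RR$ is invariant under lower triangular transformations, the Lemma will force every degeneration $B$ of $A$ to be represented by some $\lambda=s\mu\in\kk\mu$: if $s=0$ then $B\cong\kk^m$; if $s\neq 0$, the scalar matrix $s^{1/(1-n)}\cdot\Id\in GL(V)$ conjugates $\mu$ to $\lambda=s\mu$, so $B\cong A$. In particular $A$ cannot degenerate non-trivially to an algebra of a nonzero $n$-linear form, and the criterion for level one recorded just after the definition of algebras of $n$-linear forms yields $lev(A)=1$.

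The main step is computing $g*\mu$ for $g$ lower triangular. Writing $h=g^{-1}$, also lower triangular, one has
\[
(g*\mu)^m_{i_1,\dots,i_n}=g_{mm}\sum_\psi\Bigl(\prod_{r=1}^n h_{\psi(r),i_r}\Bigr)\mu^m_\psi,
\]
the sum being taken over those $\psi\in Map_{n,m}$ of pattern $p$ (otherwise $\mu^m_\psi=0$) with $\psi(r)\ge i_r$ for all $r$ (otherwise $h_{\psi(r),i_r}=0$); all $(g*\mu)^j_i$ with $j\neq m$ vanish because $ge_m=g_{mm}e_m$. Set $a_j:=|\{r\mid i_r=j\}|$. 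When $a=p$, the multisets $\{\psi(r)\}$ and $\{i_r\}$ coincide, and $\psi(r)\ge i_r$ together with $\sum_r\psi(r)=\sum_r i_r$ forces $\psi=(i_1,\dots,i_n)$; only one term survives, and the resulting scalar $g_{mm}/\prod_j g_{jj}^{p_j}$ depends on $g$ and $p$ only, not on the arrangement of $i$.

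When $a\neq p$, I take the smallest $j^*$ with $\sum_{l\le j^*}a_l\neq\sum_{l\le j^*}p_l$. If $\sum_{l\le j^*}a_l<\sum_{l\le j^*}p_l$, then a $\psi$ of pattern $p$ with $\psi(r)\ge i_r$ would force $\sum_{l\le j^*}a_l\ge\sum_{l\le j^*}p_l$, a contradiction, so the sum is empty. If $\sum_{l\le j^*}a_l>\sum_{l\le j^*}p_l$ then $\chi_{\la e_1,\dots,e_{j^*}\ra}(e_{i_1},\dots,e_{i_n})>\sum_{l\le j^*}p_l$, and applying the $GL(V)$-invariant strong $p$-anticommutativity of $g*\mu$---which holds for $\mu$ by the proof of Proposition~\ref{nlpm}---kills $(g*\mu)(e_{i_1},\dots,e_{i_n})$. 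Either way $(g*\mu)^m_{i_1,\dots,i_n}=0$, so together with the previous paragraph we obtain $g*\mu=c(g)\mu\in\kk\mu$.

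The main difficulty is this invariance computation, whose delicate ingredients are the multi-set rigidity (two length-$n$ sequences sharing a multiset and satisfying $\psi(r)\ge i_r$ componentwise must coincide) and the availability of \emph{strong} $p$-anticommutativity to eliminate patterns that strictly dominate $p$ at some partial sum---precisely what Proposition~\ref{nlpm} supplies. Granted the invariance, the Lemma of \cite{kpv20} and the scaling argument sketched at the start complete the proof.
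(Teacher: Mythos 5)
Your proof is correct and follows essentially the same route as the paper: both take $\RR=\kk\mu$, verify its invariance under lower triangular transformations, and invoke \cite[Lemma 1]{kpv20} together with the observation that nonzero scalar multiples of $\mu$ lie in its orbit while the zero multiple gives $\kk^m$. The only cosmetic difference is that you check the invariance by a direct structure-constant computation for a general lower triangular $g$ (using the strong $p$-anticommutativity established in Proposition \ref{nlpm}), whereas the paper reduces it to the elementary transvections $e_x\mapsto e_x+\alpha e_y$ already treated in the proof of that proposition.
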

\begin{proof} Let $A$ be a $p$-minimal algebra of a nonzero $n$-linear form for some $p\in par_n$. Then it can be represented by a structure $\mu\in\mathcal{A}_m$ as in Proposition \ref{nlpm}.
Let us consider the set $\mathcal{R}$ formed by algebra structures $\mu(t)$ ($t\in\kk$) with structure constants $\mu(t)_{i_1,\dots,i_n}^j=t\mu_{i_1,\dots,i_n}^j$. Note that $\mu(t)\cong\mu$ for $t\not=0$ and $\mu(0)$ is the zero structure.
Properties of $\mu$ imply that the structure constants of $\mu(t)$ in the basis $e_1,\dots,e_{x-1},e_x+\alpha e_y,e_{x+1},\dots,e_m$ coincide with the structure constants of $\mu(t)$ in the basis $e_1,\dots,e_m$ for any $1\le x<y\le m$ and any $\alpha\in\kk$ (see the proof of Proposition \ref{nlpm}), and hence $\mathcal{R}$ is closed under lower triangular transformations of the basis $e_1,\dots,e_m$. Then $A$ degenerates only to algebras represented by structures from $\mathcal{R}$, i.e. to $A$ and to the zero algebra. Thus, $lev(A)=1$.
\end{proof}

\begin{lemma}\label{nlindeg} Any $m$-dimensional algebra of a nonzero $n$-linear form degenerates to a $p$-minimal algebra of a nonzero $n$-linear form for some $p\in par_n$.
\end{lemma}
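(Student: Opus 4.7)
The plan is to prove this by an inductive construction, repeatedly applying IW-type contractions (combined with lower triangular basis modifications) until we land in a $p$-minimal structure for some partition $p$. The inductive invariant is the partition $p$ in the order introduced in Section 3, which strictly increases at each step and is bounded above by $(n,0,\dots,0)$, ensuring termination.

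\smallskip
\noindent\textbf{Stage 0 (normalization via IW).} Apply the parameterized basis $E_j^t=t^{a_j}e_j$ with weights $a_1<a_2<\cdots<a_{m-1}$ chosen generically and $a_m$ equal to the minimum over active $\phi$ of $\sum_k a_{\phi(k)}$. The limit $\mu_0$ has $\alpha^0_\phi\ne 0$ only for $\phi$ whose multiplicity vector $P_\phi$ equals the unique minimizer $P^*$. After a basis permutation we may assume $P^*$ is sorted descending, giving a partition $p^{(0)}$ with $p^{(0)}_m=0$, and $\mu_0$ satisfies the structural condition for $p^{(0)}$ while still defining a nonzero $n$-linear form.

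\smallskip
\noindent\textbf{Inductive stage.} If $\mu_0$ is $p^{(0)}$-minimal we stop. Otherwise, Proposition~\ref{nlpm} provides $x<y$, $1\le k\le p^{(0)}_y$ and $\psi$ with $|\psi^{-1}(x)|=p^{(0)}_x+k$, $|\psi^{-1}(y)|=p^{(0)}_y-k$ and $|\psi^{-1}(i)|=p^{(0)}_i$ otherwise, such that $\sum_{S\subset\psi^{-1}(x),|S|=k}\alpha^0_{\partial_S^y\psi}\ne 0$. Consider the parameterized basis
\[
E_x^t=t^{a'_x}(e_x+\beta e_y),\qquad E_j^t=t^{a'_j}e_j\ (j\ne x,m),\qquad E_m^t=t^{a'_m}e_m,
\]
with $a'_1<\cdots<a'_{m-1}$ and a nonzero constant $\beta$. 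Expanding as in the proof of Proposition~\ref{nlpm}, the structure constant at any $\phi$ whose multiplicity vector differs from $p^{(0)}$ by $+j$ at $x$ and $-j$ at $y$ becomes $\beta^{j}$ times a sum of the type~\eqref{pmineq}, hence is nonzero for $j=k$. Since $a'_x<a'_y$, each shift by $(+j,-j)$ strictly decreases the weighted sum $\sum_k a'_{\phi(k)}$, so tuning $a'_m$ to the smallest value so attained, the IW limit $\mu_1$ kills all original $p^{(0)}$-type $\phi$'s and retains (at least) the $k$-shifted ones. Rearranging the basis to sort the new multiplicity vector descending gives a partition $p^{(1)}$ which, by a direct comparison of the shifted vector $(p^{(0)}_1,\dots,p^{(0)}_x+k,\dots,p^{(0)}_y-k,\dots)$ with $p^{(0)}$, satisfies $p^{(1)}>p^{(0)}$ in the order of Section~3.

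\smallskip
\noindent\textbf{Termination.} The sequence $p^{(0)}<p^{(1)}<\cdots$ in the finite poset $par_n$ must stabilize, and by construction it stabilizes at a partition $p^{(N)}$ for which $\mu_N$ is $p^{(N)}$-minimal and defines a nonzero $n$-linear form.

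\smallskip
\noindent\textbf{Main obstacle.} The delicate step is the inductive stage: one must verify that after the combined lower triangular/IW move, (i) no term with the old multiplicity vector $p^{(0)}$ survives the limit, (ii) at least one nonzero term with the shifted multiplicity vector remains, and (iii) the set of new active multiplicity vectors, once sorted descending by basis permutation, genuinely produces a partition strictly greater than $p^{(0)}$. Checking (iii) requires the rearrangement argument that $p^{(0)}_x+k>p^{(0)}_x$ forces the first disagreement between $p^{(1)}$ and $p^{(0)}$ to occur with $p^{(1)}$ larger; (i)-(ii) follow from the strict weight inequality $a'_x<a'_y$ together with the nonvanishing of the chosen sum~\eqref{pmineq}.
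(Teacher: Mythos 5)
Your proof is correct, but it takes a genuinely different route from the paper. The paper's argument is a one-shot construction: it considers all partitions $q$ realizable by some basis $v_1,\dots,v_{m-1}$ of $U$ and some nonzero product with occurrence counts $q_i$, takes the \emph{maximal} such partition $p$, observes that this maximality over all bases immediately forces $A$ itself to be $p$-anticommutative, and then applies a single diagonal contraction with weights $c_1<\dots<c_{m-1}$ chosen so that every lexicographically smaller multiplicity vector acquires a strictly positive exponent; $p$-anticommutativity kills the lexicographically larger ones, so the limit is $p$-minimal in one step. You instead climb the partition order greedily: you isolate any one active multiplicity vector by a generic diagonal contraction, and whenever the equations of Proposition~\ref{nlpm} fail you use the failing relation \eqref{pmineq} itself to build a combined unitriangular-plus-diagonal contraction that strictly increases the partition, terminating by finiteness of $par_n$. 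Your approach trades the paper's single clean degeneration (and its non-constructive maximal choice) for an algorithmic procedure that uses Proposition~\ref{nlpm} as an oracle and transitivity of degeneration; both rest on the same expansion computed in the proof of that proposition. Two small imprecisions in your write-up, neither fatal: the limit in the inductive stage retains exactly the terms at the \emph{largest} active shift $j_{\max}\ge k$ (not necessarily the $k$-shifted ones), which still gives a shift $j\ge1$ and hence suffices; and the claim $p^{(1)}>p^{(0)}$ after re-sorting does need the argument that transferring $j$ units from the smaller part $p^{(0)}_y$ to the larger part $p^{(0)}_x$ ($x<y$) changes the underlying multiset and strictly increases the partition in dominance, hence in the lexicographic order of Section~3 --- the sketch you give is adequate and the fact is true.
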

\begin{proof} Let $A$ be an algebra of the nonzero $n$-linear form $\Delta:{U}^{\otimes n}\rightarrow\kk$. Let us consider all $q\in par_n$, $q_m=0$ such that there exist $u_1,\dots,u_n\in U$ and a basis $v_1,\dots,v_{m-1}$ of $U$ satisfying
\begin{itemize}
\item $\Delta(u_1\otimes \dots \otimes u_n)\not=0$;
\item $|\{1\le j\le n\mid u_j=v_i\}|=q_i$ for any $1\le i\le m-1$.
\end{itemize}
It is clear that the set under consideration is not empty. To see this, it is enough to choose for any basis of $U$ a nonzero product of basic elements and order the basic elements with respect to  the number of their occurrences in this product.
Let $p\in par_n$ be the maximal element of the subset of $par_n$ defined by us. It is clear from our construction that $A$ is $p$-anticommutative. Moreover, $A$ can be represented by a structure $\mu$ such that $\mu_{i_1,\dots,i_n}^j$ can be nonzero only for $1\le i_1,\dots,i_n\le m-1$ and $j=m$, and $\mu_{\phi(1),\dots,\phi(n)}^m\not=0$ for some $\phi\in Map_{n,m}$ such that $|\phi^{-1}(i)|=p_i$ for all $1\le i\le m$.

Let us choose integers $c_{m-1}>\dots>c_1>0$ such that $(1+\sum\limits_{j=i+2}^{m-1}p_j)c_{i+1}>c_i+\sum\limits_{j=i+2}^{m-1}p_jc_{j}$ for any $1\le i\le m-2$. For example, one can take $c_i=(n+1)^m-(n+1)^{m-i}$.
Let us consider the basis $E^t=\left(t^{c_1}e_1,\dots,t^{c_{m-1}}e_{m-1},t^{\sum\limits_{i=1}^{m-1}p_ic_i}e_m\right)$ for $t\in\kk^*$.
Note that for $\phi\in Map_{n,m}$ one has $\mu(e_{\phi(1)},\dots,e_{\phi(n})=t^{\sum\limits_{i=1}^{m-1}(|\phi^{-1}(i)|-p_i)c_i}\mu_{\phi(1),\dots,\phi(n)}^me_m$ and our choice of the numbers $c_1,\dots,c_{m-1}$ guarantees that $\sum\limits_{i=1}^{m-1}(|\phi^{-1}(i)|-p_i)c_i>0$ if there exist $k\ge 1$ such that $|\phi^{-1}(i)|=p_i$ for $1\le i\le k-1$ and $|\phi^{-1}(k)|<p_k$. Since $\mu$ is $p$-anticommutative, we have the degeneration $\mu\xrightarrow{E^t}\lambda$, where $\lambda$ represents a $p$-minimal  algebra of a nonzero $n$-linear form.
\end{proof}

\section{Subalgebraic algebras}

In this section we describe subalgebraic algebras of the first level giving the second half of our classification.

\begin{Def}{\rm
Let $p$ be a partition of $n-1$ with $p_{m+1}=0$ and $A$ be an $m$-dimensional $n$-ary algebra.
The algebra $A$ is called {\it maximally $p$-attractive} if it is $p$-attractive and can be represented by a structure $\mu\in\mathcal{A}_m$ such that if $\mu_{\phi(1),\dots,\phi(n)}^j\not=0$ for $1\le j\le m$ and $\phi\in Map_{n,m}$, then $|\phi^{-1}(j)|=p_j+1$ and $|\phi^{-1}(i)|=p_i$ for any $1\le i\le m$, $i\not=j$.
}
\end{Def}

Let us prove a technical lemma about $p$-attractive algebras that are similar to $p$-minimal.

\begin{lemma}\label{pattpac} Let $A$ be an $m$-dimensional $p$-attractive algebra for $p\in par_{n-1}$ with $p_{m+1}=0$. Suppose that, for some $1\le k\le m$, and some basis $b_1,\dots,b_m$ of $A$ one has
\begin{itemize}
\item $[b_{\psi(1)},\dots,b_{\psi(n)}]=0$ for $\psi\in Map_{n,m}$ such that $|\psi^{-1}(i)|<p_i$ for some $1\le i\le k-1$;
\item $[b_{\psi(1)},\dots,b_{\psi(n)}]\in\langle b_j\rangle$ for $\psi\in Map_{n,m}$ such that $|\psi^{-1}(i)|=p_i$ for $1\le i\le j-1$ and $|\psi^{-1}(j)|>p_j$ for some $1\le j\le k$.
\end{itemize}
Then $[b_{\phi(1)},\dots,b_{\phi(n)}]=0$ for $\phi\in Map_{n,m}$ such that $|\phi^{-1}(1,\dots,k)|\ge \sum\limits_{i=1}^kp_i+2$.
\end{lemma}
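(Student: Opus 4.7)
My plan combines the two bulleted hypotheses, which between them pin the bracket to a $1$-dimensional subspace $\langle b_j\rangle$, with a $1$-parameter perturbation argument and $p$-attractiveness, which force the remaining coefficient to vanish.

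For the reduction, if $|\phi^{-1}(i)|<p_i$ for some $1\le i\le k-1$ the first hypothesis gives the bracket is $0$ outright, so I may assume $|\phi^{-1}(i)|\ge p_i$ for all such $i$. The bound $|\phi^{-1}(\{1,\dots,k\})|\ge\sum_{i=1}^k p_i+2$ then forces the existence of a smallest $j\in\{1,\dots,k\}$ with $|\phi^{-1}(j)|>p_j$, and by the minimality of $j$ we get $|\phi^{-1}(i)|=p_i$ for $i<j$. Hypothesis 2 yields $[b_{\phi(1)},\dots,b_{\phi(n)}]=\gamma b_j$ for some $\gamma\in\kk$, and it is enough to show $\gamma=0$.

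For the main step, pick an index $s\ne j$ (heuristically $s>k$) and set $v=b_j+\epsilon b_s$; form the tuple $(c_1,\dots,c_n)$ obtained from $(b_{\phi(1)},\dots,b_{\phi(n)})$ by replacing every $b_j$ by $v$. The chain $\langle b_1\rangle\subset\cdots\subset\langle b_1,\dots,b_{j-1}\rangle\subset\langle b_1,\dots,b_{j-1},v\rangle$ of length $j$ satisfies the $p$-attractiveness condition for this tuple, so $[c_1,\dots,c_n]\in\langle b_1,\dots,b_{j-1},v\rangle$. Now expand the left side as a polynomial in $\epsilon$: the coefficient of $\epsilon^r$ is the sum, over $r$-element subsets $T\subset\phi^{-1}(j)$, of the basis brackets obtained from the original by substituting $b_s$ for $b_j$ at positions in $T$. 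For $r<|\phi^{-1}(j)|-p_j$ the count at $j$ still exceeds $p_j$, so hypothesis 2 confines each such bracket to $\langle b_j\rangle$; for larger $r$ the first index with strict excess has moved past $j$, and either hypothesis 2 with a larger $j'\le k$ or $p$-attractiveness (applied with a chain such as $\langle b_s\rangle\subset\cdots$) confines the bracket to a subspace missing $b_j$. Writing the right side as $\sum_{l<j}\lambda_l(\epsilon)b_l+\mu(\epsilon)(b_j+\epsilon b_s)$ and comparing the $b_s$-coefficients shows $\mu(0)=0$; comparing the $b_j$-coefficients then gives $\gamma=\mu(0)=0$.

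The main obstacle will be the intermediate-order terms in $\epsilon$, where the ``first-excess index'' is in transit across values within $\{j,\dots,k\}$ or just beyond $k$; this is exactly where the ``$+2$'' in the hypothesis is needed, since it guarantees that the hypothesis-regime and the $p$-attractiveness-regime overlap, leaving no gap in $r$ where neither tool applies.
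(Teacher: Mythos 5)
Your strategy is sound, and it ends up organized quite differently from the paper's proof. The paper first reduces to the case where the excess of $\phi$ over $\{1,\dots,k-1\}$ is at most $1$ and then splits into two cases: all excess at index $k$, handled by the perturbation $b_k\mapsto b_k+\alpha b_{k+1}$ and a contradiction with attractiveness read off the $b_{k+1}$-component; versus excess split between some $j<k$ and $k$, handled by $b_j\mapsto b_j+\alpha b_k$ together with the observation that the bracket is unchanged yet must lie in $\langle b_j\rangle\cap\langle b_1,\dots,b_{j-1},b_j+\alpha b_k\rangle=0$. Your single perturbation at the first excess index $j$, with $s>k$ (always available, since $|\phi^{-1}(\{1,\dots,k\})|\ge\sum_{i=1}^kp_i+2$ forces $\sum_{i=1}^kp_i\le n-2<n-1$ and hence $k<m$), subsumes both cases. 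Two points need tightening. First, the membership $[c_1,\dots,c_n]\in\langle b_1,\dots,b_{j-1},b_j+\epsilon b_s\rangle$ translates into the polynomial identity $c_s(\epsilon)=\epsilon\, c_j(\epsilon)$ between the $b_s$- and $b_j$-components of the left side; its $\epsilon^1$-coefficient reads $X_1^s=X_0^j=\gamma$, so the only thing you must compute is the $b_s$-component of the linear term $X_1$, not the whole list of $X_r$. This matters because your control of $X_r$ for large $r$ genuinely can fail: when $j=k$ and $r$ exceeds the excess $|\phi^{-1}(k)|-p_k$, the count at $k$ drops below $p_k$, the first hypothesis only covers indices $\le k-1$, there is no larger $j'\le k$, and a chain beginning with $\langle b_s\rangle$ would confine $X_r$ to a subspace \emph{containing} $b_s$, which is useless for killing $X_r^s$. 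Second, at $r=1$ your two regimes do meet with no gap, exactly as you predicted: if the excess at $j$ is at least $2$, then hypothesis 2 at $j$ puts $X_1\in\langle b_j\rangle$; if it equals $1$, the ``$+2$'' leaves excess at least $1$ in $\{j+1,\dots,k\}$, so a first excess index $j'$ with $j<j'\le k<s$ exists and hypothesis 2 at $j'$ puts every summand of $X_1$ in $\langle b_{j'}\rangle$. Either way $X_1^s=0$, hence $\gamma=0$. With these adjustments your proof is complete and arguably cleaner than the paper's two-case argument.
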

\begin{proof} Suppose for contradiction that $\phi\in Map_{n,m}$ is such that $|\phi^{-1}(1,\dots,k)|\ge \sum\limits_{i=1}^kp_i+2$ and $[b_{\phi(1)},\dots,b_{\phi(n)}]\not=0$.
We may assume that $|\phi^{-1}(1,\dots,k-1)|\le \sum\limits_{i=1}^{k-1}p_i+1$. Then we either have $|\phi^{-1}(i)|=p_i$ for all $1\le i\le k-1$ or  there is some $1\le j\le k-1$ such that $|\phi^{-1}(i)|=p_i$ for all $1\le i\le k-1$, $i\not=j$ and $|\phi^{-1}(j)|=p_j+1$.

Suppose first that $|\phi^{-1}(i)|=p_i$ for all $1\le i\le k-1$ and $|\phi^{-1}(k)|\ge p_{k}+2$.
Since $\sum\limits_{i=1}^{k}p_{i}<n-1$, one has $k<m$.
Let us consider the basis $b^{\alpha}=(b_1,\dots,b_{k-1},b_k+\alpha b_{k+1},b_{k+1},\dots,b_m)$. We have the expression $[b_{\phi(1)}^{\alpha}, \dots, b_{\phi(n)}^{\alpha}]=\sum\limits_{c=0}^{|\phi^{-1}(k)|}\alpha^cX_c$, where $X_c\in A$ ($0\le c\le |\phi^{-1}(k)|$) do not depend on $\alpha$. Since $X_0,X_1\in\langle b_k\rangle$ and $X_0\not=0$, we have $[b_{\phi(1)}^{\alpha}, \dots, b_{\phi(n)}^{\alpha}]\not\in \langle b_1,\dots,b_{k-1},b_{k}+\alpha b_{k+1}\rangle$ for some $\alpha\in\kk$ that contradicts the $p$-attractiveness of $A$.

Suppose now that there exists some $1\le j\le k-1$ such that $|\phi^{-1}(i)|=p_i$ for all $1\le i\le k-1$, $i\not=j$ and $|\phi^{-1}(j)|=p_j+1$. In this case $|\phi^{-1}(k)|> p_{k}$.
Let us consider the basis $b^{\alpha}=(b_1,\dots,b_{j-1},b_j+\alpha b_k,b_{j+1},\dots,b_m)$. We have
$[b_{\phi(1)}^{\alpha}, \dots, b_{\phi(n)}^{\alpha}]=[b_{\phi(1)}, \dots, b_{\phi(n)}],$
 because
$[b_{\psi(1)},\dots,b_{\psi(n)}]=0$ whenever either  $|\psi^{-1}(j)|<p_j$ or $|\psi^{-1}(i)|=p_i$ for all $1\le i\le k-1$ and $|\psi^{-1}(k)|\ge p_{k}+2$.
Since $[b_{\phi(1)}, \dots, b_{\phi(n)}]\in \langle b_{j}\rangle$ and $[b_{\phi(1)}^{\alpha}, \dots, b_{\phi(n)}^{\alpha}]\in \langle b_1,\dots,b_{j-1},b_{j}+\alpha b_{s+1}\rangle$ by $p$-attractiveness of $A$, one has $[b_{\phi(1)}, \dots, b_{\phi(n)}]=0$.
\end{proof}

\begin{prop}\label{mpa} Suppose that the $m$-dimensional algebra  $A$ is represented by the structure $\mu$  such that, for $\phi\in Map_{n,m}$, one has $\mu(e_{\phi(1)},\dots,e_{\phi(n)})=0$ if there is no $1\le j\le m$ such that $|\phi^{-1}(j)|=p_j+1$ and $|\phi^{-1}(i)|=p_i$ for all $1\le i\le m$, $i\not=j$ and $\mu(e_{\phi(1)},\dots,e_{\phi(n)})=\alpha_{\phi}e_{i_\phi}$ in the opposite case, where $|\phi^{-1}(i_\phi)|=p_{i_\phi}+1$ and $|\phi^{-1}(i)|=p_i$ for all $1\le i\le m$, $i\not=i_\phi$.
Then $A$ is maximally $p$-attractive if and only if for any $1\le x<y\le m$ the following conditions are satisfied:
\begin{itemize}
\item For any $\psi\in Map_{n,m}$ such that $|\psi^{-1}(i)|=p_i$, $i\not=x$ and $|\psi^{-1}(x)|=p_x+1$, one has
\begin{equation}\label{maxaeq1}
\sum\limits_{i\in \psi^{-1}(x)}\alpha_{\partial_{\{i\}}^y\psi}=\alpha_{\psi}.
\end{equation}
\item For any $1\le k\le p_y$ and $\psi\in Map_{n,m}$ such that $|\psi^{-1}(i)|=p_i$, $i\not=j,x,y$, $|\psi^{-1}(x)|=p_x+k+1$ and $|\psi^{-1}(y)|=p_y-k$, one has
\begin{equation}\label{maxaeq2}
\sum\limits_{S\subset \psi^{-1}(x),|S|=k}\alpha_{\partial_S^y\psi}=\sum\limits_{S\subset \psi^{-1}(x),|S|=k+1}\alpha_{\partial_S^y\psi}=0.
\end{equation}
\item For any $1\le k\le p_y$, $1\le j\le m$, $j\not=x,y$ and $\psi\in Map_{n,m}$ such that $|\psi^{-1}(j)|=p_j+1$, $|\psi^{-1}(i)|=p_i$, $i\not=j,x,y$, $|\psi^{-1}(x)|=p_x+k$ and $|\psi^{-1}(y)|=p_y-k$, one has
\begin{equation}\label{maxaeq3}
\sum\limits_{S\subset \psi^{-1}(x),|S|=k}\alpha_{\partial_S^y\psi}=0.
\end{equation}
\end{itemize}
\end{prop}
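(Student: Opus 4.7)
The plan is to parallel the proof of Proposition \ref{nlpm} by evaluating $\mu$ on a lower unitriangular transform of the basis $e_1,\dots,e_m$. Fix $1 \le x < y \le m$ and $\alpha \in \kk$, and set $b^\alpha_x = e_x + \alpha e_y$ and $b^\alpha_i = e_i$ for $i \ne x$. Multilinearity gives
$$
\mu\bigl(b^\alpha_{\phi(1)},\dots,b^\alpha_{\phi(n)}\bigr) = \sum_{S \subset \phi^{-1}(x)} \alpha^{|S|}\, \mu\bigl(e_{\partial_S^y\phi(1)},\dots,e_{\partial_S^y\phi(n)}\bigr),
$$
and by the hypothesis on $\mu$ only those $S$ contribute for which $\partial_S^y\phi$ is ``of right form'': some $j$ has $|(\partial_S^y\phi)^{-1}(j)| = p_j + 1$ while the remaining multiplicities equal $p_i$.

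A case analysis on the multiplicity pattern of $\phi$ classifies $\mu(b^\alpha_\phi)$ into four mutually exclusive forms. If $\phi$ is of right form with distinguished index $x$, only $|S| \in \{0,1\}$ survive and, written in the $b^\alpha$-basis, $\mu(b^\alpha_\phi) = \alpha_\phi b^\alpha_x + \alpha(C - \alpha_\phi)\, b^\alpha_y$ with $C = \sum_{i \in \phi^{-1}(x)} \alpha_{\partial_{\{i\}}^y\phi}$. If $\phi$ is of right form with distinguished index different from $x$, then $\mu(b^\alpha_\phi) = \mu(e_\phi)$ is unchanged. If $\phi$ has the multiplicity profile of \eqref{maxaeq2}, only $|S| \in \{k, k+1\}$ contribute and $\mu(b^\alpha_\phi) = \alpha^k C_1 b^\alpha_x + \alpha^{k+1}(C_2 - C_1)\, b^\alpha_y$ with $C_1, C_2$ the two sums in \eqref{maxaeq2}. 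If $\phi$ has the profile of \eqref{maxaeq3}, only $|S| = k$ contributes and $\mu(b^\alpha_\phi) = \alpha^k C_3\, b^\alpha_j$ with $C_3$ as in \eqref{maxaeq3}. All other $\phi$ give $\mu(b^\alpha_\phi) = 0$. Consequently, the structure constants of $\mu$ in the $b^\alpha$-basis coincide with those in the $e$-basis for every $\alpha \in \kk$ if and only if equations \eqref{maxaeq1}, \eqref{maxaeq2}, \eqref{maxaeq3} all hold.

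For sufficiency, the three equations imply this invariance under every elementary lower unitriangular transformation and hence, by composition, under the entire lower unitriangular group. Strong $p$-attractiveness is then immediate for axis-aligned $A_0 = \langle e_{i_1},\dots,e_{i_k}\rangle$: whenever $\mu(e_\phi) \ne 0$, monotonicity of $p$ forces either $\chi_{A_0}(e_\phi) \le \sum_{r=1}^k p_r$ or $i_\phi \in \{i_1,\dots,i_k\}$, so that $\mu(e_\phi) \in A_0$. Since every $k$-dimensional subspace admits a reduced row echelon basis, which is the image of some $\langle e_{i_1},\dots,e_{i_k}\rangle$ under a lower unitriangular change of basis, invariance of the structure constants transfers strong $p$-attractiveness to every $A_0$, yielding maximal $p$-attractiveness.

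For necessity, apply the chain version of $p$-attractiveness to the chain $B_r = \langle b^\alpha_1,\dots,b^\alpha_r\rangle$: one checks that $\chi_{B_r}(b^\alpha_\phi) = \sum_{i=1}^r p_i$ for $r < x$ while $\chi_{B_x}(b^\alpha_\phi) > \sum_{i=1}^x p_i$, so $\mu(b^\alpha_\phi) \in B_x$. Extracting the coefficient of each $b^\alpha_l$ outside $B_x$ immediately yields \eqref{maxaeq1}, the relation $C_1 = C_2$ of \eqref{maxaeq2}, and \eqref{maxaeq3} in the subcase $j > x$. I expect the main obstacle to be securing the separate vanishings $C_1 = 0$ in \eqref{maxaeq2} and \eqref{maxaeq3} when $j < x$, which the single chain based at $B_x$ does not detect. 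To handle them I would invoke Lemma~\ref{pattpac} on the transformed basis $b^\alpha$ --- whose hypotheses reduce, via the type-\eqref{maxaeq1} identities already derived, to the required form --- or equivalently compose several elementary lower unitriangular changes of basis and iterate the chain argument, propagating the constraints through the compositions. This second-order bookkeeping is the most delicate part of the proof.
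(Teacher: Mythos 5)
Your proposal is correct and follows essentially the same route as the paper: expand $\mu$ on the elementary lower unitriangular perturbations $b^\alpha$ of the basis, read off the three families of equations as the condition that the structure constants are unchanged, deduce sufficiency via invariance under the full lower unitriangular group together with row-echelon bases of arbitrary subspaces, and obtain the remaining vanishings in the necessity direction from Lemma~\ref{pattpac}. The ``delicate'' step you flag is resolved exactly as you propose: applying Lemma~\ref{pattpac} with $k=x$ to the basis $b^\alpha$, whose hypotheses are verified using \eqref{maxaeq1}, the identity $C_1=C_2$, and \eqref{maxaeq3} for $j>x$ (all already obtained from the chain form of $p$-attractiveness), yields $C_1=C_2=0$ in \eqref{maxaeq2} and $C_3=0$ in \eqref{maxaeq3} for $j<x$, which is precisely the paper's intended use of that lemma.
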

\begin{proof} Let us set $e_i^{\alpha}=e_i$ for $1\le i\le m$, $i\not=x$ and $e_x^{\alpha}=e_x+\alpha e_y$ with $\alpha\in \kk$ and $1\le x<y\le m$. Then, for $\psi\in Map_{n,m}$ one has
$$
\mu(e_{\psi(1)}^{\alpha},\dots,e_{\psi(m)}^{\alpha})=\begin{cases}
\beta_ke_x+\gamma_ke_y&\mbox{ if $|\psi^{-1}(i)|=p_i$ for all $1\le i\le m$,}\\
&\mbox{ $i\not=x,y$, $|\psi^{-1}(x)|=p_x+k+1$,}\\
&\mbox{ $|\psi^{-1}(y)|=p_y-k$, $k\ge 0$};\\
\left(\alpha^k\sum\limits_{S\subset \psi^{-1}(x),|S|=k}\alpha_{\partial_S^y\psi}\right)e_j&\mbox{ if $|\psi^{-1}(j)|=p_j+1$, $|\psi^{-1}(i)|=p_i$}\\
&\mbox{ for all $1\le i\le m$, $i\not=j,x,y$,}\\
&\mbox{ $|\psi^{-1}(x)|=p_x+k$, $|\psi^{-1}(y)|=p_y-k$,}\\
&\mbox{ $k\ge 0$, $1\le j\le m$, $j\not=x,y$};\\
0,&\mbox{ otherwise,}
\end{cases}
$$
where $\beta_k=\alpha^k\sum\limits_{S\subset \psi^{-1}(x),|S|=k}\alpha_{\partial_S^y\psi}$ and $\gamma_k=\alpha^{k+1}\sum\limits_{S\subset \psi^{-1}(x),|S|=k+1}\alpha_{\partial_S^y\psi}$.
Now it is clear that if $A$ is $p$-attractive, then due to Lemma \ref{pattpac} the equalities \eqref{maxaeq1}, \eqref{maxaeq2} and \eqref{maxaeq3} have to be satisfied for all $x$, $y$, $k$ and $\psi$ as in the statement of the proposition.

Suppose now that the equalities \eqref{maxaeq1}, \eqref{maxaeq2} and \eqref{maxaeq3} are valid for all $x$, $y$, $k$ and $\psi$ as in the statement of the proposition. It is enough to show that $A$ is strongly $p$-attractive. The proof of this fact is the same as the proof of the strong $p$-anticommutativity in Proposition \ref{nlpm}.
\end{proof}

\begin{coro}\label{subalev} Let $p$ be a partition of $n-1$ with $p_{m+1}=0$. Then any nonzero maximally $p$-attractive algebra is subalgebraic and has level one.
\end{coro}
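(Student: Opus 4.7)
The plan splits naturally into two parts, following the template of Corollary \ref{nlinlev}.

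\textbf{Subalgebraicity.} The proof of Proposition \ref{mpa} in fact shows that the equalities \eqref{maxaeq1}--\eqref{maxaeq3} force $A$ to be \emph{strongly} $p$-attractive, not merely $p$-attractive (the closing paragraph of that proof reduces to the argument used at the end of Proposition \ref{nlpm}). Given any $a_1, \dots, a_n \in A$, set $A_0 = \langle a_1, \dots, a_n\rangle$ and $k = \dim A_0$. If $k = 0$ the claim is trivial; otherwise $1 \le k \le m$, and since $\chi_{A_0}(a_1, \dots, a_n) = n$ while $\sum_{i=1}^k p_i \le \sum_{i=1}^m p_i = n - 1 < n$, strong $p$-attractiveness yields $[a_1, \dots, a_n] \in A_0 = \langle a_1, \dots, a_n\rangle$. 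Hence $A$ is subalgebraic.

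\textbf{Level one.} I would mimic the strategy of Corollary \ref{nlinlev}. Pick $\mu \in \mathcal{A}_m$ representing $A$ as in Proposition \ref{mpa} and form the one-parameter family $\mathcal{R} = \{\mu(t) \mid t \in \kk\}$ with $\mu(t)^j_{i_1, \dots, i_n} = t\, \mu^j_{i_1, \dots, i_n}$. For $t \ne 0$ the scalar change of basis by $t^{1/(1-n)}\Id$ (which exists since $\kk$ is algebraically closed and $n \ge 2$) exhibits $\mu(t) \cong \mu$, while $\mu(0)$ represents $\kk^m$. Up to isomorphism $\mathcal{R}$ thus consists of $A$ and $\kk^m$ only.

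The decisive point is that $\mathcal{R}$ is invariant under lower triangular transformations of $e_1, \dots, e_m$, so that \cite[Lemma~1]{kpv20} applies. Invariance under lower unitriangular changes is immediate from the computation inside the proof of Proposition \ref{mpa}. For a diagonal $g = \operatorname{diag}(d_1, \dots, d_m)$, a direct computation gives
\begin{equation*}
(g * \mu)^{j}_{\phi(1), \dots, \phi(n)} \;=\; \frac{d_j}{d_{\phi(1)} \cdots d_{\phi(n)}}\; \mu^{j}_{\phi(1), \dots, \phi(n)}.
\end{equation*}
Whenever $\mu^{j}_{\phi(1), \dots, \phi(n)} \ne 0$, the maximality pattern $|\phi^{-1}(j)| = p_j + 1$ and $|\phi^{-1}(i)| = p_i$ for $i \ne j$ forces $d_{\phi(1)} \cdots d_{\phi(n)} = d_j \prod_{i=1}^m d_i^{p_i}$, so every nonzero structure constant is multiplied by the common scalar $\bigl(\prod_i d_i^{p_i}\bigr)^{-1}$, independent of $j$ and $\phi$. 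Therefore $g * \mu \in \mathcal{R}$, and since every lower triangular matrix decomposes as a product of a diagonal and a unitriangular one, $\mathcal{R}$ is invariant under the full lower triangular group. Applying \cite[Lemma~1]{kpv20}, every degeneration of $A$ is represented by an element of $\mathcal{R}$; since $A$ is nonzero the only non-trivial degeneration starting at $A$ is $A \xrightarrow{\not\cong} \kk^m$, giving $lev(A) = 1$.

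The main obstacle I anticipate is precisely the uniformity of the diagonal scaling factor: one must check that $d_j / (d_{\phi(1)} \cdots d_{\phi(n)})$ takes the same value for every admissible pair $(j, \phi)$. This is the combinatorial content hidden in the maximality condition, and is exactly what makes the one-parameter-family argument succeed in the $p$-attractive setting just as it did for $p$-minimal forms.
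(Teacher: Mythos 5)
Your proof is correct and follows essentially the same route as the paper, which simply transfers the argument of Corollary \ref{nlinlev} via Proposition \ref{mpa}: strong $p$-attractiveness gives subalgebraicity since $\chi_{A_0}(a_1,\dots,a_n)=n>n-1\ge\sum_{i=1}^k p_i$, and the one-parameter family $\mathcal{R}=\{t\mu\}$ invariant under lower triangular transformations gives level one. Your explicit check that the diagonal part of a lower triangular matrix rescales all nonzero structure constants by the uniform factor $\bigl(\prod_i d_i^{p_i}\bigr)^{-1}$ is a welcome detail that the paper leaves implicit.
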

\begin{proof} The required assertion can be deduced from Proposition \ref{mpa} in the same way as Corollary \ref{nlinlev} was deduced from Proposition \ref{nlpm}.
\end{proof}

\begin{lemma}\label{subadeg} Any nonzero $m$-dimensional subalgebraic algebra degenerates to a nonzero maximally $p$-attractive algebra for some $p\in par_{n-1}$ with $p_{m+1}=0$.
\end{lemma}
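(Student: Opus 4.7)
The plan is to mimic Lemma \ref{nlindeg}. The main new feature is that in a subalgebraic algebra the product $[a_1,\dots,a_n]$ lies in $\langle a_1,\dots,a_n\rangle$ rather than in a fixed 1-dimensional subspace, so alongside the ``shape'' of $\phi$ one must also track the target index $j$ of each nonzero structure constant.

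First I would choose a partition $p$ and a basis $b^*$ of $A$. For each basis $b=(b_1,\dots,b_m)$ and each pair $(\phi,j)\in Map_{n,m}\times\{1,\dots,m\}$ such that the coefficient of $b_j$ in $[b_{\phi(1)},\dots,b_{\phi(n)}]$ is nonzero (an \emph{active} pair), let $\sigma(b;\phi,j)$ be the non-increasing sorting of the vector $(|\phi^{-1}(i)|-\delta_{ij})_{i=1}^m$; subalgebraicity forces $j$ to lie in the image of $\phi$, so the entries are nonnegative and $\sigma(b;\phi,j)\in par_{n-1}$. Let $p$ be the lex-maximum of these partitions over all bases and all active pairs, and let $b^*$ together with $(\phi_0,j_0)$ realize the maximum. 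After permuting $b^*$ if necessary, I can further arrange $(|\phi_0^{-1}(i)|-\delta_{i,j_0})_i=(p_i)_i$ as a sequence; the condition $p_{m+1}=0$ is automatic.

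Next I would run an IW-type diagonal contraction. Pick strictly increasing positive integers $c_1<c_2<\cdots<c_m$ growing rapidly enough (analogously to $c_i=(n+1)^m-(n+1)^{m-i}$ in Lemma \ref{nlindeg}) that $\sum_i d_i c_i>\sum_i p_i c_i$ for every nonnegative composition $(d_i)$ of $n-1$ with either $\mathrm{sort}(d_i)<p$ in lex or $\mathrm{sort}(d_i)=p$ but $(d_i)\neq (p_i)$ as a sequence; strict increase handles non-identity permutations of $p$ via the rearrangement inequality, and rapid growth handles strictly smaller sortings just as in Lemma \ref{nlindeg}. Set $\Lambda=\sum_i p_i c_i$ and $E_i^t=t^{(n-1)c_i-\Lambda}b_i^*$. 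A direct computation shows that the structure constant at $(\phi,j)$ in the basis $E^t$ equals $\alpha_{\phi,j}\cdot t^{(n-1)(\sum_i m_i c_i-c_j-\Lambda)}$ with $m_i=|\phi^{-1}(i)|$; the exponent vanishes precisely when $(m_i-\delta_{ij})_i=(p_i)_i$ as a sequence, and is strictly positive whenever $\alpha_{\phi,j}\neq 0$ with non-matching shape (by the maximality of $p$ together with the chosen $c_i$). Taking $t\to 0$ thus produces a nonzero $\lambda$ with the structural form required in the definition of a maximally $p$-attractive algebra; $\lambda$ is subalgebraic because subalgebraic structures form a closed subset of $\mathcal{A}_m$.

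The main obstacle is verifying that $\lambda$ is genuinely $p$-attractive. My plan is to first show that $\mu$ itself is $p$-attractive, which together with closedness of $p$-attractive structures in $\mathcal{A}_m$ transfers the property to $\lambda$ and completes the argument. For this, a chain $A_1\subset\cdots\subset A_k$ and a tuple $(a_1,\dots,a_n)$ witnessing failure of $p$-attractiveness would, upon extending the chain to a basis of $A$ and expanding $[a_1,\dots,a_n]$ by multilinearity, yield---after ruling out coincidental cancellations with a generic-coefficient argument in the spirit of Lemma \ref{pattpac}---an active pair in that basis whose sort-shape strictly exceeds $p$, contradicting the maximality of $p$ over all bases.
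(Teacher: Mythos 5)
Your argument is correct, but it takes a genuinely different route from the paper's. The paper defines its partition $\bar p$ via tuples $a_1,\dots,a_n$ whose entries \emph{equal} prescribed independent vectors $b_1,\dots,b_r$ with $[a_1,\dots,a_n]\not\in\langle b_1,\dots,b_r\rangle$, proves by hand that this forces $r=m-1$ and $p_m\le p_{m-1}$ when $\sum\bar p_i<n-1$, and then performs a chain of $m$ successive IW contractions, using Lemma \ref{pattpac} at each stage to verify that $\langle e_1,\dots,e_r\rangle$ and $\langle e_{r+1},\dots,e_m\rangle$ remain subalgebras of the appropriate arities. You instead maximize, over all bases, the sorted shape $(|\phi^{-1}(i)|-\delta_{ij})_i$ of nonzero structure constants, kill everything of non-maximal shape in a single weighted diagonal contraction, and transfer $p$-attractiveness to the limit via the closedness (and $GL(V)$-invariance) of the $p$-attractive locus established in Section 3. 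This buys a shorter degeneration argument and avoids the delicate bookkeeping of the iterated IW contractions; the paper's route, in exchange, produces more structural information along the way (the explicit subalgebra filtration of each $A[s]$) and never needs the exponent inequalities for arbitrary permuted shapes.

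Two steps of yours deserve to be written out. First, the choice of $c_i$: with $c_i=(n+1)^m-(n+1)^{m-i}$ one has $\sum_i q_ic_i>\sum_i p_ic_i$ for every partition $q<p$ of $n-1$ (this reduces to comparing base-$(n+1)$ expansions), and the rearrangement inequality with strictly increasing $c_i$ handles shapes whose sort equals $p$ but which are not non-increasing; note these $c_i$ have \emph{decreasing} increments, so "growing rapidly" is the wrong heuristic even though the formula works. Second, and more importantly, your worry about "coincidental cancellations" in the $p$-attractiveness step is a red herring and the genericity argument is unnecessary: if $A_1\subset\dots\subset A_k$ and $a_1,\dots,a_n$ witness a failure, adapt a basis $c$ so that $A_r=\langle c_1,\dots,c_r\rangle$ and expand multilinearly; since the image of $[a_1,\dots,a_n]$ modulo $A_k$ is nonzero, at least \emph{one} summand $[c_{\phi(1)},\dots,c_{\phi(n)}]$ with nonzero coefficient is active at some $j>k$, and the nonvanishing of its coefficient already forces $|\phi^{-1}(\{1,\dots,r\})|\ge\chi_{A_r}(a_1,\dots,a_n)$ for all $r\le k$. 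What you actually need to check is that these partial-sum inequalities (with strict inequality at $r=k$) imply that the \emph{sorted} shape exceeds $p$ in the lexicographic order; this is true but is exactly the point where a naive "one partial sum is too big" argument fails, so it should be argued via the first index at which the sorted shape and $p$ differ. With these two points filled in, your proof is complete.
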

\begin{proof} It is clear that any nonzero $1$-dimensional algebra is represented by the structure $\mu$ such that $\mu(e_1,\dots,e_1)=e_1$, i.e. any $1$-dimensional algebra is maximally $(n-1,0,\dots)$-attractive. So we may assume that $m>1$.
Let $A$ be a subalgebraic algebra. Let us consider all collections $(b_1,q_1),\dots,(b_r,q_r)\in A\times\mathbb{Z}$ such that $q_1\ge\dots\ge q_r>0$, $\sum\limits_{i=1}^rq_i<n$, $b_1,\dots,b_r$ are linearly independent and there exist
 $a_1,\dots,a_n\in A$ satisfying the conditions
\begin{itemize}
\item $[a_1, \dots, a_n]\not\in \langle b_1,\dots,b_r\rangle$;
\item $|\{1\le j\le n\mid a_j=b_i\}|=q_i$ for any $1\le i\le r$.
\end{itemize}
Let us consider the set of all partitions $q=(q_1,\dots,q_r,0,\dots)\in \bigcup\limits_{i= 1}^{n-1}par_i$ arising in this way. Since $m>1$, it is clear that the partition $(1,0,\dots)\in par_1$ satisfies the conditions above, and hence the set under consideration is not empty. Let $\bar p=(\bar p_1,\dots,\bar p_r,0,\dots)\in par_{s}$ with $\bar p_r\not=0$ and $s=\sum\limits_{i=1}^r\bar p_i$ be the maximal element of this set.
By our construction, one has $[a_1,\dots,a_n]\subset \langle c_1,\dots,c_k\rangle$ for $a_1,\dots,a_n\in A$, $1\le k\le r$ and linear independent $c_1,\dots,c_k\in A$ such that
$|\{1\le j\le n\mid a_j=c_i\}|=\bar p_i$ for $1\le i\le k-1$ and $|\{1\le j\le n\mid a_j=c_k\}|>\bar p_k$.
Since $A$ is subalgebraic, it is $\bar p$-attractive whenever $s=n-1$. In this case we set $p:=\bar p$.

Suppose now that $s<n-1$. Suppose that we have $r<m-1$ at the same time. Let us pick  $a_1,\dots,a_n,b_1,\dots,b_r\in A$ such that $[a_1, \dots, a_n]\not\in \langle b_1,\dots,b_r\rangle$ and $|\{1\le j\le n\mid a_j=b_i\}|=\bar p_i$ for any $1\le i\le r$. Since $\sum\limits_{i=1}^r\bar p_i\le n-2$, we can pick two different indices $x,y\in\{1,\dots,n\}$ such that $a_x,a_y\not\in \{b_1,\dots,b_r\}$. By our construction, we have $[a_1, \dots, a_n]\in \langle b_1,\dots,b_r,a_x\rangle\cap \langle b_1,\dots,b_r,a_y\rangle$, and hence $a_x$ and $a_y$ are linearly dependent modulo $\langle b_1,\dots,b_r\rangle$. Since $r\le m-2$, there exists $a\in A$ linear independent with $a_x$ modulo $\langle b_1,\dots,b_r\rangle$. Note that
$$[a_1, \dots, a_n]=[a_1, \dots,a_{y-1}, a_y-a,a_{y+1},\dots, a_n]+[a_1, \dots,a_{y-1}, a,a_{y+1},\dots, a_n]$$
and by our construction we have
$$[a_1, \dots,a_{y-1}, a_y-a,a_{y+1},\dots, a_n]\in  \langle b_1,\dots,b_r,a_x\rangle\cap \langle b_1,\dots,b_r,a_y-a\rangle=\langle b_1,\dots,b_r\rangle$$
and
$$[a_1, \dots,a_{y-1}, a,a_{y+1},\dots, a_n]\in  \langle b_1,\dots,b_r,a_x\rangle\cap \langle b_1,\dots,b_r,a\rangle=\langle b_1,\dots,b_r\rangle.$$
This contradicts our choice of $a_1,\dots,a_n,b_1,\dots,b_r$, and hence $r=m-1$ in the case $s<n-1$.
Let us set $p_i=\bar p_i$ for $1\le i\le m-1$, $p_m=n-1-\sum\limits_{i=1}^{m-1}\bar p_i$ and $p_i=0$ for $i>m$. Let us prove that $p_m\le p_{m-1}$, and hence $p\in par_{n-1}$. Note that $A$ is automatically $p$-attractive in this case.

Suppose that $p_m>p_{m-1}$.
Let us pick  $a_1,\dots,a_n,b_1,\dots,b_{m-1}\in A$ such that $[a_1, \dots, a_n]\not\in \langle b_1,\dots,b_{m-1}\rangle$ and the cardinality of the set $I_i=\{1\le j\le n\mid a_j=b_i\}$ equals $p_i$ for any $1\le i\le m-1$.
Let us complete $b_1,\dots,b_{m-1}$ to the basis of $A$ by some element $b_m$. Then we may assume that $a_j=b_m$ for $j\in I_m=\{1,\dots,n\}\setminus\left(\bigcup\limits_{i=1}^{m-1}I_i\right)$. Then $|I_m|=p_m+1> p_{m-1}+1$.
Let us define $a_i^{\alpha}=a_i+\alpha\chi_{I_m}(i)b_{m-1}$ for $\alpha\in\kk$. We have the expression $[a_1^{\alpha}, \dots, a_n^{\alpha}]=\sum\limits_{c=0}^{p_m+1}\alpha^cX_c$, where $X_c\in A$ ($0\le c\le p_m+1$) do not depend on $\alpha$.
Note that $X_0=[a_1, \dots, a_n]\in \langle b_1,\dots,b_{m-2},b_m\rangle\setminus \langle b_1,\dots,b_{m-2}\rangle$ and $X_1\in \langle b_1,\dots,b_{m-2},b_m\rangle$ because $|I_m|-1>p_{m-1}$, and hence $[a_1^{\alpha}, \dots, a_n^{\alpha}]\not\in \langle b_1,\dots,b_{m-2},b_m+\alpha b_{m-1}\rangle$ for some $\alpha\in\kk$.
On the other hand, $[a_1^{\alpha}, \dots, a_n^{\alpha}]\in \langle b_1,\dots,b_{m-2},b_m+\alpha b_{m-1}\rangle$ for any $\alpha\in\kk$ by our assumptions. The obtained contradiction implies that $p\in par_{n-1}$.

In result, we have found $p\in par_{n-1}$ such that $A$ is $p$-attractive and represented by a structure $\mu$ such that $\mu_{\phi(1),\dots,\phi(n)}^m\not=0$ for some $\phi\in Map_{n,m}$ such that $|\phi^{-1}(i)|=p_i$ for all $1\le i\le m-1$ and $|\phi^{-1}(m)|=p_m+1$. The subspace $\langle e_1\rangle\subset V$ is a $(p_1+1)$-subalgebra of $\mu$ by Lemma \ref{pattpac}.

Let us now set $A[1]:=A$. Let us prove by induction on $s$ that there is a degeneration from $A$ to an algebra $A[s]$  ($1\le s\le m$) represented by a structure $\mu[s]$ such that
\begin{itemize}
\item $\mu[s]_{\phi(1),\dots,\phi(n)}^m\not=0$ for some $\phi\in Map_{n,m}$ such that $|\phi^{-1}(i)|=p_i$ for all $1\le i\le m-1$ and $|\phi^{-1}(m)|=p_m+1$;
\item $\langle e_1,\dots,e_r\rangle$ is a $\left(\sum\limits_{i=1}^rp_i+1\right)$-subalgebra of $\mu[s]$ for $1\le r\le s$;
\item $\langle e_{r+1},\dots,e_m\rangle$ is a $\left(\sum\limits_{i=r+1}^mp_i+1\right)$-subalgebra of $\mu[s]$ for $1\le r\le s-1$.
\end{itemize}
Suppose that we have already constructed the degeneration $A\to A[s]$ for some $1\le s\le m-1$.
Let $A[s+1]$ be the $\left(\sum\limits_{i=1}^sp_i+1\right)$-IW contraction of $A[s]$ with respect to $\langle e_1,\dots,e_s\rangle$. Let us prove that the corresponding structure $\mu[s+1]$ representing $A[s+1]$ satisfies the required conditions.
One has  $\mu[s+1]_{\phi(1),\dots,\phi(n)}^m=\mu[s]_{\phi(1),\dots,\phi(n)}^m$ for $\phi\in Map_{n,m}$ such that $|\phi^{-1}(i)|=p_i$ for all $1\le i\le m-1$ and $|\phi^{-1}(m)|=p_m+1$.
It follows from the properties of $\mu[s]$ and the $\left(\sum\limits_{i=1}^sp_i+1\right)$-IW contraction that $\langle e_1,\dots,e_r\rangle$ is a $\left(\sum\limits_{i=1}^rp_i+1\right)$-subalgebra and  $\langle e_{r+1},\dots,e_m\rangle$ is a $\left(\sum\limits_{i=r+1}^mp_i+1\right)$-subalgebra of $\mu[s+1]$ for $1\le r\le s$.

It remains to show that $\langle e_1,\dots,e_{s+1}\rangle$ is a $\left(\sum\limits_{i=1}^{s+1}p_i+1\right)$-subalgebra of $\mu[s+1]$.  The first property of a $\left(\sum\limits_{i=1}^{s+1}p_i+1\right)$-subalgebra follows directly from $p$-attractiveness of $A[s+1]$ and the fact that $\mu[s+1](e_{\phi(1)},\dots,e_{\phi(n)})=0$ for $\phi\in Map_{n,m}$ if there exists $1\le k\le s$ with $|\phi^{-1}(k)|<p_k$.

By our construction, one has $\mu[s+1](e_{\psi(1)},\dots,e_{\psi(n)})=0$ if $|\psi^{-1}(i)|<p_i$ for some $1\le i\le s$.
Moreover, since $\langle e_{j+1},\dots,e_m\rangle$ is a $\left(\sum\limits_{i=j+1}^mp_i+1\right)$-subalgebra of $\mu[s+1]$ and $A[s+1]$ is $p$-attractive, we have $\mu[s+1](e_{\psi(1)},\dots,e_{\psi(n)})\in \langle e_{j+1},\dots,e_m\rangle\cap\langle e_{1},\dots,e_{j+1}\rangle=\langle e_{j+1}\rangle$ whenever $1\le j\le s$, $|\psi^{-1}(i)|=p_i$ for all $1\le i\le j$ and $|\psi^{-1}(j+1)|> p_{j+1}$.
Then $\mu[s+1](e_{\phi(1)},\dots,e_{\phi(n)})=0$ for $\phi\in Map_{n,m}$ with $|\phi^{-1}(1,\dots,s+1)|\ge \sum\limits_{i=1}^{s+1}p_i+2$ by Lemma \ref{pattpac}.

We have constructed a degeneration from $A$ to a nonzero $p$-attractive algebra $A[m]$ represented by a structure $\mu[m]$ such that
$\langle e_1,\dots,e_r\rangle$ is a $\left(\sum\limits_{i=1}^rp_i+1\right)$-subalgebra and $\langle e_{r+1},\dots,e_m\rangle$ is a $\left(\sum\limits_{i=r+1}^mp_i+1\right)$-subalgebra of $\mu[m]$ for any $1\le r\le m-1$. These conditions guarantee that $A[m]$ is a maximally $p$-attractive algebra.
\end{proof}

\section{Main Theorem}

In this section we formulate the promised description of $n$-ary algebras of the first level and then clarify this description for $n=2,3$.
Corollaries \ref{nlinlev}, \ref{subalev}, Lemmas \ref{nlindeg}, \ref{subadeg} and the fact that any algebra of level one is either an algebra of a nonzero $n$-linear form or subalgebraic imply the next theorem.

\begin{Mtheorem} Let $A$ be an $m$-dimensional $n$-ary algebra. The algebra $A$ has level one if and only if either $A$ is a $p$-minimal algebra of a nonzero $n$-linear form for some $p\in par_n$ with $p_m=0$ or $A$ is a nonzero maximally $p$-attractive algebra for some $p\in par_{n-1}$ with $p_{m+1}=0$.
\end{Mtheorem}

\begin{rema} The description of $n$-ary algebras of level one in terms of structure constants is given in Propositions \ref{nlpm} and \ref{mpa}. To classify these algebras up to isomorphism one has to consider the systems of linear equations appearing in these propositions in more details. For example, multiplying all structure constants by a nonzero element of $\kk$ we get an isomorphic algebra and we will see that in the cases $n=2,3$ these transformations give all isomorphisms between algebra structures of level one participating in our classification, but the question if there are other isomorphisms between first level structures described here for $n>3$ we leave open in this paper.
On the other hand, it is easy to see that the partition $p$ appearing in the description of these structures is uniquely determined by the isomorphism class of an algebra.
\end{rema}

Our Main Theorem allows to describe the algebras of the first infinite level.

\begin{coro} Let $A$ be an $m$-dimensional $n$-ary algebra. The algebra $A$ has infinite level one if and only if $A$ is a $p$-minimal algebra of a nonzero $n$-linear form for some $p\in par_n$ with $p_m=0$.
\end{coro}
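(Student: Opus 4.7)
The plan is to use the Main Theorem at each dimension $m+s$ and show that the maximally $p$-attractive alternative is destroyed by appending one free direction, while the $p$-minimal $n$-linear form alternative survives any number of them. For the easy direction $(\Leftarrow)$, I would observe that if $A$ is a $p$-minimal algebra of a nonzero $n$-linear form with $p_m=0$, then after padding $p$ with extra zeros and relabelling the image vector as $e_{m+s}$, the algebra $A\oplus\kk^s$ again satisfies the structural description of Proposition \ref{nlpm}; so Corollary \ref{nlinlev} gives $lev(A\oplus\kk^s)=1$ for every $s$ and hence $lev_{\infty}(A)=1$.

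For the hard direction $(\Rightarrow)$, I would first argue that $lev(A\oplus\kk^s)$ is non-decreasing in $s$: any non-trivial degeneration chain lifts through $\oplus\kk$, and standard cancellation of a $\kk$-summand preserves non-isomorphism. Combined with $lev_{\infty}(A)=1$, this forces $lev(A\oplus\kk^s)=1$ for every $s$, and in particular $lev(A)=lev(A\oplus\kk)=1$. The Main Theorem then offers two possibilities: $A$ is already $p$-minimal of a nonzero $n$-linear form (giving the conclusion), or $A$ is a nonzero maximally $p'$-attractive algebra for some $p'\in par_{n-1}$ with $p'_{m+1}=0$; I need to rule out the second case.

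To do so, I would take a structure $\mu$ representing $A$ as in Proposition \ref{mpa} and pick some $\phi\in Map_{n,m}$ with $\alpha_\phi\neq 0$, giving $\mu(e_{\phi(1)},\dots,e_{\phi(n)})=\alpha_\phi e_{j_0}$ where $|\phi^{-1}(j_0)|=p'_{j_0}+1\ge 1$, so $j_0$ lies in the image of $\phi$. In $A\oplus\kk$ with new basis vector $e_{m+1}$, setting $a_k=e_{\phi(k)}+e_{m+1}$ yields $[a_1,\dots,a_n]=\alpha_\phi e_{j_0}$, while every element $\sum_k\lambda_k a_k$ of $\langle a_1,\dots,a_n\rangle$ has its $e_{m+1}$-coefficient equal to the sum of its $e_i$-coefficients ($i\le m$); matching this with $\alpha_\phi e_{j_0}$ would force that common value to be simultaneously $0$ and $\alpha_\phi\neq 0$, which is impossible. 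So $A\oplus\kk$ is not subalgebraic. By the discussion preceding the Main Theorem, it then degenerates to a nonzero algebra $B$ of an $n$-linear form; since $A$ is nonzero and subalgebraic (Corollary \ref{subalev}) it is not an $n$-linear form algebra (the two classes intersect only at the zero structure), and a quick check on the image of multiplication (which is contained in $A$) transfers this property to $A\oplus\kk$. Hence $A\oplus\kk\not\cong B$, and the chain $A\oplus\kk\xrightarrow{\not\cong}B\xrightarrow{\not\cong}0$ gives $lev(A\oplus\kk)\ge 2$, contradicting the earlier conclusion $lev(A\oplus\kk)=1$. The main obstacle is the monotonicity $lev(A\oplus\kk^s)\le lev(A\oplus\kk^{s+1})$ used at the start, which rests on the standard cancellation of a $\kk$-summand in the isomorphism relation (implicit in the paper's definition of $lev_{\infty}$); all other steps reduce to the focussed linear computation above and to facts already established in the earlier sections.
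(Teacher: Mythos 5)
Your proof is correct and follows essentially the same route as the paper's: the forward direction by noting that $p$-minimality of a form algebra is preserved under adding a trivial summand, and the converse by showing that $A\oplus\kk$ fails to be subalgebraic for nonzero $A$ (your explicit computation with $a_k=e_{\phi(k)}+e_{m+1}$ is a concrete instance of the paper's perturbation argument) and then invoking the Main Theorem. The monotonicity of $lev(A\oplus\kk^s)$ in $s$ that you lean on, which rests on the standard cancellation of a trivial direct summand, is likewise used implicitly by the paper (already in making sense of the limit defining $lev_{\infty}$), so it is not a gap relative to the paper's own level of detail.
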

\begin{proof} It is clear that $A\oplus \kk$ is a $p$-minimal algebra of a nonzero $n$-linear form if $A$ is. On the other hand, $A\oplus\kk$ is	 not subalgebraic for any nonzero algebra $A$.
To see this, it is enough to choose $a_1,\dots,a_n\in A$ such that $[a_1,\dots,a_n]=\alpha a_n$ for some $\alpha\in\kk^*$ and then replace $a_i$ by $a_i+\alpha_i e$ for some $\alpha_i\in\kk$ ($1\le i\le n$), where $e$ is the basis of the direct summand $\kk$ in such a way that $a_n\not\in \langle a_1+\alpha_1e,\dots,a_n+\alpha_ne\rangle$.
Now the required assertion follows from Main Theorem.
\end{proof}

\begin{Ex} Suppose that $n=2$. Note that $|par_2|=2$ and $|par_1|=1$. Thus, we get the following classification of binary algebras of level one:
\begin{enumerate}
\item If $p=(2,0,\dots)$, then the only $p$-minimal algebra of a nonzero bilinear form is represented by the structure $\bf{A}_3$ with the only nonzero product $e_1e_1=e_2$.
\item If $p=(1,1,0,\dots)$, then the only $p$-minimal algebra of a nonzero bilinear form is represented by the structure $\bf{n}_3$ with nonzero products $e_1e_2=e_3$ and $e_2e_1=-e_3$.
\item If $p=(1,0,\dots)$, then any nonzero maximally $p$-attractive algebra is represented either by the structure $p^-$ with nonzero products $e_1e_i=e_i$ and $e_ie_1=-e_i$ for $2\le i\le m$ or by a structure $\nu^{\alpha}$ ($\alpha\in\kk$) with nonzero products $e_1e_1=e_1$, $e_1e_i=\alpha e_i$ and $e_ie_1=(1-\alpha)e_i$ for $2\le i\le m$.
\end{enumerate}
This classification agrees with the results of \cite{kpv17}.
\end{Ex}

\begin{Ex} Suppose that $n=3$. Note that $par_3=\{(3,0,\dots),(2,1,0,\dots),(1,1,1,0,\dots)\}$ and $par_2=\{(2,0,\dots),(1,1,0,\dots)\}$. Thus, using Propositions \ref{nlpm} and \ref{mpa}, we get the following classification of ternary algebras of level one:
\begin{enumerate}
\item If $p=(3,0,\dots)$, then the only $p$-minimal algebra of nonzero $3$-linear form is represented by the structure with the only nonzero product $[e_1,e_1,e_1]=e_2$.
\item If $p=(2,1,0,\dots)$, then any  $p$-minimal algebra of nonzero $3$-linear form is represented by a structure with nonzero products $[e_1,e_1,e_2]=\alpha_3 e_3$, $[e_1,e_2,e_1]=\alpha_2 e_3$ and $[e_2,e_1,e_1]=\alpha_1 e_3$, where $\alpha_1,\alpha_2,\alpha_3\in\kk$ are not all zero and $\alpha_1+\alpha_2+\alpha_3=0$. The triple $(\alpha_1,\alpha_2,\alpha_3)$ is determined by the isomorphism class of the algebra up to multiplication by a nonzero element of $\kk$.
\item If $p=(1,1,1,0,\dots)$, then the only $p$-minimal algebra of nonzero $3$-linear form is represented by the structure with nonzero products $[e_{\sigma(1)},e_{\sigma(2)},e_{\sigma(3)}]=sign(\sigma) e_4$ for all permutations $\sigma$ of the set $\{1,2,3\}$, where $sign(\sigma)$ is the sign of the permutation $\sigma$.
\item If $p=(2,0,\dots)$, then any nonzero maximally $p$-attractive algebra is represented by the structure with nonzero products $[e_1,e_1,e_1]=\epsilon e_1$, $[e_1,e_1,e_i]=\alpha_3 e_i$, $[e_1,e_i,e_1]=\alpha_2 e_i$ and $[e_i,e_1,e_1]=\alpha_1 e_i$  for $2\le i\le m$, where $\alpha_1,\alpha_2,\alpha_3\in\kk$, $\epsilon\in\{0,1\}$ and $\alpha_1+\alpha_2+\alpha_3=\epsilon$. If $\epsilon=0$, then the triple $(\alpha_1,\alpha_2,\alpha_3)$ is nonzero and is determined by the isomorphism class of the algebra up to multiplication by a nonzero element of $\kk$. If $\epsilon=1$, then the triple $(\alpha_1,\alpha_2,\alpha_3)$ is uniquely determined by the isomorphism class of the algebra.
\item If $p=(1,1,0,\dots)$, then any nonzero maximally $p$-attractive algebra is represented by the structure with nonzero products $[e_1,e_1,e_2]=(\alpha_2-\alpha_3)e_1$, $[e_1,e_2,e_1]=(\alpha_3-\alpha_1)e_1$, $[e_2,e_1,e_1]=(\alpha_1-\alpha_2)e_1$, $[e_2,e_2,e_1]=(\alpha_3-\alpha_2)e_2$, $[e_2,e_1,e_2]=(\alpha_1-\alpha_3)e_2$, $[e_1,e_2,e_2]=(\alpha_2-\alpha_1)e_2$, $[e_i,e_1,e_2]=\alpha_1e_i$, $[e_i,e_2,e_1]=-\alpha_1e_i$, $[e_1,e_i,e_2]=-\alpha_2e_i$, $[e_2,e_i,e_1]=\alpha_2e_i$, $[e_1,e_2,e_i]=\alpha_3e_i$, $[e_2,e_1,e_i]=-\alpha_3e_i$ for $3\le i\le m$, where $\alpha_1,\alpha_2,\alpha_3\in\kk$ are not all zero and $\alpha_1+\alpha_2+\alpha_3=0$ if $m>2$. The triple $(\alpha_1,\alpha_2,\alpha_3)$ is determined by the isomorphism class of the algebra up to multiplication by a nonzero element of $\kk$ if $m>2$. If $m=2$, then not all $\alpha_1,\alpha_2,\alpha_3$ are equal and the triple $(\alpha_1,\alpha_2,\alpha_3)$ is determined by the isomorphism class of the algebra up to multiplication by a nonzero element of $\kk$ and addition of an element of $\kk$, but the last mentioned transformation does not change the structure representing the algebra.
\end{enumerate}
\end{Ex}

\bigskip

{\bf Acknowledgements.} The work was supported by the Russian Science Foundation research project number 19-71-10016. The author is a Young Russian Mathematics award winner and would like to thank its sponsors and jury.

\noindent{{\bf Addresses:}
\newline
Yury Volkov \\
Saint-Petersburg State University\\
Universitetskaya nab. 7-9, St. Peterburg, Russia\\
e-mail:  wolf86\_666@list.ru}
\end{document}